\newcommand{\Ann}{\mathfrak{ann}}
\newcommand{\fp}{\mathfrak p}
\newcommand{\fk}{\mathfrak k}
\newcommand{\fh}{\mathfrak h}
\newcommand{\fl}{\mathfrak l}
\newcommand{\fq}{\mathfrak q}
\newcommand{\fm}{\mathfrak m}
\newcommand{\Ad}{{\rm Ad}}
\newcommand{\ad}{{\rm ad}}
\newcommand{\id}{{\rm id}}
\newcommand{\na}{\nabla}
\newcommand{\U}{\Upsilon}
\newcommand{\Rho}{{\mbox{\sf P}}}
\newcommand{\R}{\mathbb{R}}
\newcommand{\C}{\mathbb{C}}
\newtheorem{prop*}{Proposition}
\newtheorem{thm*}{Theorem}
\newtheorem{lemma*}{Lemma}
\newtheorem{cor*}{Corollary}
\newtheorem{rem*}{Remark}
\theoremstyle{definition}
\newtheorem{def*}{Definition}
\theoremstyle{remark}
\newtheorem{exam}{Example}
\begin{document}
\title{On symmetric CR geometries of hypersurface type}
\author{Jan Gregorovi\v c and Lenka Zalabov\' a}
\address{J.G. Faculty of Mathematics, University of Vienna, Oskar Morgenstern Platz 1, 1090 Wien, Austria; L.Z. Institute of Mathematics, Faculty of Science, University of South Bohemia, Brani\v sovsk\' a 1760, \v Cesk\' e Bud\v ejovice, 370 05, Czech Republic and Department of Mathematics and Statistics, Faculty of Science, Masaryk University, Kotl\' a\v rsk\' a 2, Brno, 611 37, Czech Republic}
\email{jan.gregorovic@seznam.cz, lzalabova@gmail.com}
\thanks{First author supported by the project P29468 of the Austrian Science Fund (FWF). Second author supported by the grant 17-01171S of the Czech Science Foundation (GA\v CR)}
\keywords{CR geometry, homogeneous manifold, Webster metric}
\subjclass[2010]{32V05, 32V30, 53C30}

\maketitle
\begin{abstract} 
We study non--degenerate CR geometries of hypersurface type that are symmetric in the sense that, at each point, there is a CR transformation reversing the CR distribution at that point. We show that such geometries are either flat or homogeneous. We show that non--flat non--degenerate symmetric CR geometries of hypersurface type are covered by CR geometries with a compatible pseudo--Riemannian metric preserved by all symmetries. We construct examples of simply connected flat non--degenerate symmetric CR geometries of hypersurface type that do not carry a pseudo--Riemannian metric compatible with the symmetries.
\end{abstract}

\section{Introduction} \label{prvni-cast}

In \cite{KZ}, Kaup and Zaitsev generalized Riemannian symmetric spaces to the setting of CR geometries, i.e., smooth manifolds with so--called CR distribution endowed with a complex structure. They consider a Riemannian metric compatible with CR geometry in the sense that the Riemannian metric is preserved by the complex multiplication on the CR distribution. Such manifold is symmetric in the sense of \cite{KZ} if, at each point, there is an isometric CR transformation that preserves the point and which, at that point, acts as $-\id$ on the CR distribution  \cite[Definition 3.5.]{KZ}. They show that such isometric CR transformations are uniquely determined by the tangent action on the CR distribution \cite[Theorem 3.3]{KZ}. They also show that such CR geometries are homogeneous \cite[Proposition 3.6]{KZ}. In fact, these CR geometries may be considered as reflexion spaces in the sense of \cite{L1}. In  \cite{AMN}, the authors study these CR geometries in the setting of so--called CR algebras.
 
We studied in \cite{my-sigma} filtered geometric structures that carry an automorphism at each point that fixes the point and acts as $-$id on a distinguished part of the filtration at the point. Let us point out that the non--degenerate CR geometries of hypersurface type, i.e., those with CR distribution of codimension $1$, are among these geometries. We answered the question whether these filtered geometries are homogeneous and can be considered as reflexion spaces. However, our result \cite[Theorem 5.7.]{my-sigma} holds under weaker conditions than the result of \cite{KZ} for non--degenerate CR geometries of hypersurface type. In particular, the sufficient condition for such non--degenerate CR geometry of hypersurface type to be homogeneous is that it  is non--flat at one point.

In this paper, we study the case of non--degenerate CR geometries of hypersurface type in more detailed way. We consider point preserving CR transformations which, at that point, induce $-$id on the CR distribution. We say that a non--degenerate CR geometry of hypersurface type is symmetric (in our sense) if there exists a symmetry at each point, see  Definition \ref{def1}. In particular, our definition does not require the existence of a metric compatible with the CR geometry. We adapt and significantly improve general results of \cite{my-TG,my-dga,my-sigma} for our particular class of CR geometries. Let us emphasize that every non--degenerate CR geometry of hypersurface type that is symmetric in the sense of \cite{KZ} is symmetric (in our sense).

Let us say that \cite[Theorem 5.7.]{my-sigma} is formulated in the general setting of parabolic geometries. We provide here the particular results of this theorem for CR geometries. We also provide new direct proofs, because we will need the presented ideas to explain new results, see Lemmas \ref{dve-symetrie}, \ref{involutivity} and Propositions \ref{hladky-sys}, \ref{thm1}.  This allows us to compare our results with results of \cite{KZ} and \cite{AMN}.

We prove in  Theorem \ref{thm3} that non--flat non--degenerate CR geometries of hypersurface type that are symmetric (in our sense) are covered by symmetric non--degenerate CR geometries of hypersurface type that carry a pseudo--Riemannian metric compatible with the CR geometry that is preserved by all our symmetries. In the Riemannian signature, these coverings are symmetric in the sense of \cite{KZ}, see Theorem \ref{thm4}. Moreover, we show in Theorem \ref{embed} that it is always possible to embed the CR geometry on these coverings into a complex manifold. In the Riemannian signature, this embedding is provided by a different construction than the one in \cite[Proposition 7.3]{KZ}.

Finally, we construct examples of non--homogeneous symmetric (in our sense)  flat non--degenerate CR geometries of hypersurface type. These examples do not admit a pseudo--Riemannian metric that would be preserved by some symmetry at each point and in particular, they are not symmetric in the sense of \cite{KZ}. We also discuss examples of homogeneous CR geometries on orbits of real forms in complex flag manifolds. In particular, we show that there are homogeneous CR geometries which are locally symmetric but not globally symmetric. In fact, Theorem \ref{orbit} provides complete description of all possible cases.

\section{CR geometries of hypersurface type}

\subsection{CR geometries}

Let $M$ be a smooth manifold of dimension $2n+1$ for $n>1$ together with a distribution $\mathcal{H} \subset TM$ of dimension $2n$ and a \emph{complex structure} $J$ on $\mathcal{H}$, i.e., $J:\mathcal{H} \to \mathcal{H}$ is an endomorphism with the property that $J^2=-\id$. The triple $(M,\mathcal{H},J)$ is called a \emph{CR geometry of hypersurface type} if the $i$--eigenspace $\mathcal{H}^{1,0}$ of $J$ in the complexification of $\mathcal{H}$ is integrable, i.e., $[\mathcal{H}^{1,0},\mathcal{H}^{1,0}] \subset \mathcal{H}^{1,0}$.  CR geometry $(M,\mathcal{H},J)$ is called \emph{non--degenerate} if $\mathcal{H}$ is completely non--integrable.

On $\mathcal{H}$ there exists a symmetric bilinear form $h$  with values in the line bundle $TM/\mathcal{H}$ given by $h(\xi,\eta)= \frac12 \pi([\xi,J \eta])$ for all $\xi,\eta \in \Gamma(\mathcal{H})$, where $\pi: TM\to TM/\mathcal{H}$ is a natural projection. Let us recall that $h$ is the real part of the Levi form $\tilde h$ of $(M,\mathcal{H},J)$, while the imaginary part of the Levi form is the map given by $\frac12 \pi([\xi,\eta])$. We assume that $M$ is orientable and denote by $(p,q)$ the signature of the Levi form, where our convention is $p\leq q$, $p+q=n$. Then the signature of $h$ is $(2p,2q)$.

The homogeneous space $PSU(p+1,q+1)/P$ is usually called the \emph{standard model} of non--degenerate CR geometry of hypersurface type of signature $(p,q)$, where the group $PSU(p+1,q+1)$ is the projectivization of the group of matrices preserving the pseudo--Hermitian form 
$$m((u_0,\dots,u_{n+1}), (v_0,\dots,v_{n+1}))=u_0\overline{v_{n+1}}+u_{n+1} \overline{v_0}+\sum_{k=1}^p u_k\overline{v_k}-\sum_{k=p+1}^n u_k\overline{v_k}$$
on $\C^{n+2}$ and $P$ is the stabilizer of the complex line generated by the first basis vector in the standard basis of $\C^{n+2}$. The standard model $PSU(p+1,q+1)/P$ is a smooth real hypersurface in $\C P^{n+1}$ that can be also viewed as the projectivization of the null cone of $m$ in $\C^{n+2}$.

In the rest of the paper, by a CR geometry we mean a non--degenerate CR geometry of hypersurface type of signature $(p,q)$ for $p\leq q$. Such CR geometries can be equivalently described as parabolic geometries modeled on standard models $PSU(p+1,q+1)/P$. This description can be found in \cite[Section 4.2.4]{parabook}. We only use several consequences of this description later in the text.

 \subsection{Distinguished connections}
 
There exist many admissible connections, i.e., connections preserving $\mathcal{H}$ and $J$, on CR geometries. 
In particular, there are several distinguished classes of admissible connections given by a particular normalization condition on the torsion of admissible connections in the class. The most common class is the class of Webster--Tanaka connections \cite [Section 5.2.12]{parabook}. Another important class is the class of Weyl connections \cite[Sections 5.1.2 and 5.2.13]{parabook}. In this paper, we consider the class of Weyl connections, because in our proofs we use  relations between CR transformations and geodesic transformations of normal Weyl connections \cite[Section 5.1.12]{parabook}. 

In fact, Webster--Tanaka connections and Weyl connections induce the same class of distinguished partial connections $\na$ on $\mathcal{H}$. 
Such two distinguished partial connections $\na$ and $\hat \na$ are related by the formula
\begin{align} \label{rozdil}
\begin{split}
\hat \na_{\xi}(\eta)&=\na_{\xi}(\eta)+
F(\xi)\eta+F(\eta)\xi-\tilde h(\xi, \eta)\tilde h^{-1}(F), \\
\end{split}
\end{align} 
where $\xi,\eta$ are vector fields on $\mathcal{H}$ and $F$ is  a one--form in $\mathcal{H}^*$. 
Here $\tilde h^{-1}$ is the inverse of the Levi form $\tilde h$. We will write shortly $\hat \na=\na + F$ instead of the entire formula $(\ref{rozdil})$.

Each Weyl connection $D$ is associated with particular decompositions $TM \simeq \mathcal{H} \oplus \ell$ and $T^*M\simeq \mathcal{H}^* \oplus \ell^*$ that are preserved by $D$, where $\ell$ is a one--dimensional distribution complementary to $\mathcal{H}$. In fact,  one-form $F$ in $\mathcal{H}^*$ from the formula (\ref{rozdil}) also describes the change of the decompositions of $TM$ and $T^*M$ associated with $D$ and $\hat D$. The precise formula for the change of the decompositions can be easily computed using \cite[Section 5.1.5]{parabook}. In general, arbitrary two Weyl connections $D$ and $\hat D$ are related by a suitable
 action of a one--form $\U=\U_1+\U_2$ in $T^*M=\mathcal{H}^* \oplus \ell^*$, where we consider the decomposition associated with the Weyl connection $D$. We write shortly $\hat D=D+ \U_1+\U_2$ instead of the explicit formula for the change, which is  complicated and can be computed using \cite[Section 5.1.6]{parabook}. Let us emphasize that $\U_1$ coincides with $F$ from the formula (\ref{rozdil}) for the corresponding partial connections $\na$, $\hat \na$ determined by $D$, $\hat D$.
 
Let us finally point out that admissible connections provide the fundamental invariant $W$ of  CR geometries which is known as \emph{Chern--Moser tensor} or \emph{Weyl tensor} and coincides with the totally trace--free part of the curvature of arbitrary Weyl or Webster--Tanaka connection. Vanishing of this invariant implies that  CR geometry is \emph{flat}, meaning that  CR geometry is locally equivalent to the standard model $PSU(p+1,q+1)/P$.

\section{Symmetries of CR geometries}
\subsection{Definition of symmetries}\label{sec31}
A \emph{CR transformation} of  CR geometry $(M,\mathcal{H},J)$ is a diffeomorphism of $M$ such that the tangent map preserves the \emph{CR distribution} $\mathcal{H}$ and its restriction to $\mathcal{H}$ is complex linear. We study the following CR transformations.

\begin{def*}\label{def1}
A \emph{symmetry} at $x \in M$ on a CR manifold $(M,\mathcal{H},J)$ is a CR transformation $S_x$ of $M$ such that:
\begin{enumerate}
\item $S_x(x)=x$,
\item $T_xS_x=-$id on $\mathcal{H}$.
\end{enumerate}
We say that  CR geometry is \emph{symmetric} if there exists a symmetry at each point $x \in M$. A \emph{system of  symmetries} on $M$ is a choice of a symmetry $S_x$ at each $x \in M$. We call the system \emph{smooth}, if the map  $S: M \times M \to M$ given by $S(x,y)=S_x(y)$  is smooth in both variables.
\end{def*}

Let us show that the standard model $PSU(p+1,q+1)/P$ is symmetric. The Lie group $PSU(p+1,q+1)$ is the group of all CR transformations of the standard model $PSU(p+1,q+1)/P$, where we consider left action. 
Direct computation gives that all symmetries of the standard model $PSU(p+1,q+1)/P$ at the origin $eP$ are represented by $(1,n,1)$--block matrices of the form
\begin{align}\label{symform}
s_{Z,z}=\left(
\begin{matrix}
-1&-Z& iz+\frac12ZIZ^* \\ 0&E&-IZ^* \\ 0&0&-1
\end{matrix}
\right)
,\end{align}
where $Z \in \C^{n*}$, $z \in \R^*$ are arbitrary, $E$ is the identity matrix of the rank $n$ and $I$ is the diagonal matrix with the first $p$ entries equal to $1$ and the remaining $q$ entries equal to $-1$. 

\begin{lemma*} \label{standard}
There exists an infinite number of symmetries at each point $kP$ of $PSU(p+1,q+1)/P$ given by matrices of the form $ks_{Z,z}k^{-1}$ for all $Z \in \C^{n*}$ and $z \in \R^*$. In particular:
\begin{enumerate}
\item There exists an infinite number of involutive symmetries at each point characterized by the condition $z=0$. For each such symmetry, there is a different metric preserved by this symmetry compatible with the CR geometry.
\item There exists an infinite number of non--involutive symmetries at each point characterized by the condition $z\neq 0$. They do not preserve any metric compatible with the CR geometry. 
\end{enumerate}
\end{lemma*}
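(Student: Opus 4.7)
The plan is to verify every assertion by direct block--matrix computation from the explicit form (\ref{symform}), combined with the transitive action of $PSU(p+1,q+1)$ on the standard model and a standard rigidity result for pseudo--Riemannian isometries.

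For the main assertion, a short computation shows that $s_{Z,z}$ preserves the form $m$ and is block upper triangular, so $s_{Z,z}\in P\subset SU(p+1,q+1)$. Under the identification $T_{eP}(G/P)\cong\fg_{-1}\oplus\fg_{-2}$, the unipotent radical of $P$ acts trivially, so the tangent action of $s_{Z,z}$ is determined by its Levi part $\mathrm{diag}(-1,E,-1)$, which by $\ad$ acts as $-\id$ on $\fg_{-1}\cong\mathcal{H}_{eP}$ and as $+\id$ on $\fg_{-2}\cong\ell_{eP}$. Thus $s_{Z,z}$ is a symmetry at $eP$, and transitivity of $PSU(p+1,q+1)$ yields the symmetries $ks_{Z,z}k^{-1}$ at each $kP$. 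A further block multiplication gives
$$s_{Z,z}^2=\begin{pmatrix}1&0&-2iz\\0&E&0\\0&0&1\end{pmatrix},$$
which is central in $SU(p+1,q+1)$ (hence trivial in $PSU(p+1,q+1)$) iff the matrix is scalar, i.e.\ iff $z=0$. This settles the classification into involutive and non--involutive symmetries.

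For (1), the element $s_{0,0}=\mathrm{diag}(-1,E,-1)$ is also unitary with respect to the standard Hermitian inner product on $\C^{n+2}$ and so lies in $U(p+1,q+1)$; it therefore preserves the Webster metric $g_0$ induced on the null quadric $PSU(p+1,q+1)/P$. For general $Z$ one writes $s_{Z,0}=u_Z s_{0,0}u_Z^{-1}$ with $u_Z=\exp(X_Z)\in\exp(\fg_1)\subset P$ (a direct conjugation computation identifies $X_Z$ linearly in $Z$); then $s_{Z,0}$ preserves the push--forward $(u_Z)_\ast g_0$. Distinct values of $Z$ yield distinct metrics, since the isometry group of $g_0$ meets $P$ in a proper subgroup that does not contain $\exp(\fg_1)$, so distinct cosets $u_Z\cdot\mathrm{Iso}(g_0)$ produce distinct push--forwards.

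For (2), I would invoke the rigidity of pseudo--Riemannian isometries: an isometry of a connected pseudo--Riemannian manifold fixing a point with trivial differential at that point is the identity. Since the tangent map of $s_{Z,z}^2$ at $eP$ equals $\id$ on both $\mathcal{H}_{eP}$ and $\ell_{eP}$, rigidity would force $s_{Z,z}^2=\id$ whenever $s_{Z,z}$ preserves a compatible pseudo--Riemannian metric; this contradicts $z\neq 0$ by the matrix computation above. The main obstacle I anticipate is the second half of (1): arguing that distinct $Z$ really give distinct preserved metrics, which needs precise information about the stabilizer of $g_0$ inside $P$. Everything else reduces to routine matrix algebra together with the classical isometry rigidity theorem.
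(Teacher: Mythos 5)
Your proof is correct in substance and, on part (2), takes a genuinely different route from the paper. The paper's entire proof is two sentences based on conjugacy classes: each $s_{Z,0}$ is conjugate to an element of the maximal compact subgroup of $PSU(p+1,q+1)$ and hence preserves the corresponding invariant metric, while each $s_{Z,z}$ with $z\neq 0$ lies in a different conjugation orbit than the maximal compact subgroup and therefore preserves no compatible metric. Your treatment of (1) is the same idea made explicit: you exhibit the conjugating element $u_Z\in\exp(\fg_1)$ and push forward the invariant metric. Your treatment of (2), however, replaces the conjugacy-orbit argument by the computation of $s_{Z,z}^2$ (which is correct, and which also supplies the proof, omitted in the paper, that involutivity is equivalent to $z=0$) combined with the rigidity theorem that an isometry fixing a point with identity differential there is the identity; since $T_{eP}s_{Z,z}$ is block upper triangular with $-\id$ on $\mathcal{H}$ and $+\id$ on the quotient, its square has identity differential, so preservation of a compatible metric on $TM$ would force $s_{Z,z}^2=\id$, contradicting $z\neq0$. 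This is arguably more robust than the paper's argument: in indefinite signature the isometry group of a compatible pseudo--Riemannian metric need not be compact, so the implication ``not conjugate into the maximal compact $\Rightarrow$ no invariant compatible metric'' needs justification that the paper does not supply, whereas your rigidity argument works uniformly in all signatures (under the natural reading, as in Kaup--Zaitsev, that a compatible metric is a metric on all of $TM$). The one soft spot, which you correctly flag, is the distinctness claim in (1): what you need is $\mathrm{Iso}(g_0)\cap\exp(\fg_1)=\{e\}$, not merely that $\exp(\fg_1)$ is not contained in $\mathrm{Iso}(g_0)$. In Riemannian signature this follows because a nontrivial element of $\exp(\fg_1)$ generates a subgroup with non-compact closure while the isometry group of a compact Riemannian manifold is compact; in indefinite signature a separate argument is needed (e.g.\ that a nontrivial $u_W$ moves the contact form determined by $g_0$ at $eP$). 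Note that the paper's proof does not address distinctness of the metrics at all, so you are not missing anything it contains.
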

\begin{proof}
Each element $s_{Z,z}$ satisfying $z=0$ is conjugated to an element of maximal compact subgroup of $PSU(p+1,q+1)$, and thus preserves the corresponding metric. Each element $s_{Z,z}$ satisfying $z\neq 0$ is contained in a different orbit with respect to the conjugation than the maximal compact subgroup of $PSU(p+1,q+1)$, and thus it cannot preserve any compatible metric.
\end{proof}

The standard model $PSU(p+1,q+1)/P$ is endowed with a pseudo--Riemannian metric compatible with the CR geometry, because the maximal compact subgroup of $PSU(p+1,q+1)$ acts transitively on the standard model.  Moreover, there is exactly one involutive symmetry at each point of this model that is contained in the maximal compact subgroup. These symmetries preserve the corresponding pseudo--Riemannian metric and form a smooth system. This means that in the Riemannian signature, the standard model $PSU(1,n+1)/P$ is symmetric in the sense of \cite{KZ}.

On flat CR geometries, the set of symmetries is locally identical with the one on the standard model. However, these symmetries may not be defined globally. This means that on flat CR geometries, there locally always exists a pseudo--Riemannian metric compatible with the CR geometry preserved by some symmetry at each point. We show in  Example \ref{exam1} that such  pseudo--Riemannian metric compatible with  CR geometry does not have to exist globally.

\subsection{Involutive and non--involutive symmetries}

Suppose that there is a symmetry $S_x$ at $x$ on  CR geometry $(M,\mathcal{H},J)$. If $D$ is a Weyl connection, then $S_x^*D$ is a Weyl connection, too. Therefore, there is a one--form $\U_1+\U_2 \in \mathcal{H}^*\oplus \ell^*$ such that
\begin{align} \label{zmena}
S_x^*D=D+ \U_1+\U_2.
\end{align}

\begin{lemma*} \label{dve-symetrie}
Suppose $S_x$ is a symmetry at $x \in M$. Let $D$ be an arbitrary Weyl connection and let $\U_1+\U_2 \in \mathcal{H}^*\oplus \ell^*$ be the one--form from the formula $(\ref{zmena})$. Then the following claims are  equivalent:
\begin{enumerate}
\item the symmetry $S_x$ is involutive,
\item  $\U_2(x)=0$, and
\item the diffeomorphism $S_x$ is linear in the normal coordinates given by the normal Weyl connection $\bar D$ at $x$ that is uniquely determined by the property that $\bar D$ coincides with the Weyl connection $D+{1 \over 2} \U_1$ at $x$.
\end{enumerate}
Moreover, the partial connection $\nabla^{S_x}$ induced by the Weyl connection $D^{S_x}:=D+{1 \over 2} \U_1$ does not depend on the choice of $D$ at $x$ and satisfies 
\begin{itemize}
\item $S_x^*(\nabla^{S_x})=\nabla^{S_x}$ at $x$, and 
\item $\na^{S_x} W(x)=0$.
\end{itemize}
\end{lemma*}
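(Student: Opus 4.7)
The plan is to begin from two basic observations. By hypothesis $T_xS_x=-\id$ on $\mathcal{H}_x$, and because $S_x$ is a CR transformation preserving the Levi form, the induced action on $(TM/\mathcal{H})_x$ is forced to be the identity. Dually, pullback by $T_xS_x$ acts by $-\id$ on the annihilator $\mathcal{H}^*_x$ and by $+\id$ on $\ell^*_x\cong(TM/\mathcal{H})^*_x$. These weight data drive every computation that follows.

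I would establish the equivalences by the chain (2)$\Leftrightarrow$(3)$\Leftrightarrow$(1). For (2)$\Leftrightarrow$(3), writing $\bar D=D+\bar\U$ with $\bar\U(x)=\tfrac12\U_1(x)$ and using the affine action of one-forms on Weyl connections, one computes
\[
(S_x^*\bar D-\bar D)|_x=\U(x)+T_xS_x^*\bar\U(x)-\bar\U(x)=\U_1(x)+\U_2(x)-\tfrac12\U_1(x)-\tfrac12\U_1(x)=\U_2(x),
\]
so $S_x^*\bar D=\bar D$ at $x$ precisely when $\U_2(x)=0$; combining this with the characterization of linearity in normal coordinates as preservation of the normal Weyl connection at $x$ \cite[Section 5.1.12]{parabook} gives the equivalence. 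For (3)$\Rightarrow$(1), linearity in $\bar D$-normal coordinates conjugates $S_x^2$ to the linear map $(T_xS_x)^2$. In block form on $T_xM=\mathcal{H}_x\oplus\ell_{\bar D,x}$ the tangent map $T_xS_x$ is upper-triangular with diagonal entries $-\id$ and $\id$, so regardless of the off-diagonal block a direct computation gives $(T_xS_x)^2=\id$. Hence $S_x^2=\id$ on a neighbourhood of $x$, and jet-determinacy of CR automorphisms extends this globally. For the converse (1)$\Rightarrow$(3), involutivity forces $T_xS_x$ to be diagonalisable with $-1$-eigenspace $\mathcal{H}_x$ and a canonical $+1$-eigenspace complement, which I would identify with the Weyl structure $D+\tfrac12\U_1$ at $x$ via the explicit change-of-complement formula of \cite[Section 5.1.5]{parabook}; this matches $\bar D$ and implies $S_x^*\bar D=\bar D$ at $x$.

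The moreover part follows by analogous affine bookkeeping. If $\hat D=D+V_1+V_2$, the corresponding one-form $\hat\U$ satisfies $\hat\U_1(x)=\U_1(x)-2V_1(x)$ and $\hat\U_2(x)=\U_2(x)$, from which $\hat D^{S_x}(x)-D^{S_x}(x)=V_2(x)\in\ell^*_x$ follows; since the induced partial connection on $\mathcal{H}$ depends only on the $\mathcal{H}^*$-component (formula \eqref{rozdil}), $\nabla^{S_x}$ is independent of $D$ at $x$. The same mechanism applied to $\U$ itself gives $S_x^*D^{S_x}(x)-D^{S_x}(x)=\U_2(x)\in\ell^*_x$, hence $S_x^*\nabla^{S_x}=\nabla^{S_x}$ at $x$. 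Finally, the $S_x$-invariance of $W$ together with the invariance of $\nabla^{S_x}$ at $x$ yields $S_x^*(\nabla^{S_x}W)(x)=\nabla^{S_x}W(x)$; but $W$ is a section of $(\mathcal{H}^*)^{\otimes 3}\otimes\mathcal{H}$, so $\nabla^{S_x}W$ is a section of $(\mathcal{H}^*)^{\otimes 4}\otimes\mathcal{H}$, and pullback by $T_xS_x=-\id$ multiplies it by $(-1)^{4+1}=-1$. Consequently $\nabla^{S_x}W(x)=-\nabla^{S_x}W(x)=0$.

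The step I anticipate as most delicate is (1)$\Rightarrow$(3): matching the $+1$-eigenspace complement of $T_xS_x$ with the Weyl structure coming from $D+\tfrac12\U_1$ requires unwinding the explicit change-of-complement formula in parabolic geometry, and this is where the specific affine structure of Weyl connections enters nontrivially. The remaining arguments are essentially careful bookkeeping of the $\mathcal{H}^*$- and $\ell^*$-components of one-forms under pullback.
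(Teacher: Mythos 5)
Your handling of (2)$\Leftrightarrow$(3), of (3)$\Rightarrow$(1), and of the ``moreover'' claims follows essentially the paper's route: the affine action of one--forms on Weyl connections, normality of $\bar D$, and the parity count $(-1)^{4+1}=-1$ on $\otimes^4\mathcal{H}_x^*\otimes\mathcal{H}_x$ forcing $\na^{S_x}W(x)=0$. One justification is missing even there: the dual action of $T_xS_x$ on $T_x^*M=\mathcal{H}^*_x\oplus\ell^*_x$ is \emph{not} diagonal in general; it has an off--diagonal component $\mathcal{H}^*_x\to\ell^*_x$ depending linearly on $\U_1$. The paper observes that this component is antisymmetric as a map $\mathcal{H}^*_x\otimes\mathcal{H}^*_x\to\ell^*_x$, which is exactly why $S_x^*(\U_1)(x)=-\U_1(x)$ holds on the nose; without that remark your evaluation $(S_x^*\bar D-\bar D)|_x=\U_2(x)$ is unjustified (though the conclusion is correct).

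The genuine gap is (1)$\Rightarrow$(3). The argument you sketch --- diagonalise $T_xS_x$, take its $+1$--eigenspace as a complement to $\mathcal{H}_x$, and match that complement with the splitting of $D+\tfrac12\U_1$ --- does not use involutivity of $S_x$ at any point: as you yourself note in the (3)$\Rightarrow$(1) step, $T_xS_x$ is block upper--triangular with diagonal $(-\id,\id)$, so $(T_xS_x)^2=\id$ and the $+1$--eigenspace complement exists for \emph{every} symmetry, involutive or not; moreover it always coincides with the $\ell$--splitting of $D+\tfrac12\U_1$ at $x$, because the splitting of $TM$ is governed by the $\mathcal{H}^*$--component of the one--form alone. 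Preserving the complement $\ell_x$ is strictly weaker than preserving the Weyl structure at $x$: the latter additionally requires $\U_2(x)=0$, which is precisely statement (2) and is the only place where involutivity enters. Since the standard model admits non--involutive symmetries $s_{Z,z}$ with $z\neq0$, for which (3) must fail, your argument would prove too much. The missing idea is the paper's iteration: from $S_x^*S_x^*D=D+\U_1+S_x^*(\U_1)+\U_2+S_x^*(\U_2)$ together with $S_x^*(\U_1)(x)=-\U_1(x)$ and $S_x^*(\U_2)(x)=+\U_2(x)$, involutivity forces $0=2\U_2(x)$, i.e.\ (1)$\Rightarrow$(2); your own (2)$\Rightarrow$(3) then closes the cycle.
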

\begin{proof}
Iterating the formula $(\ref{zmena})$ we compute $$S_x^*S_x^*D=D+ \U_1+ S_x^*(\U_1)+\U_2+S_x^*(\U_2).$$ The component of the (dual) action of $T_xS_x$ on $T_x^*M$
 preserving the decomposition $T_x^*M=\mathcal{H}^*(x)\oplus \ell^*(x)$ is $-\id\oplus \id$, and the component that maps $\mathcal{H}^*(x)$ into $\ell^*(x)$ depends linearly on $\U_1$ and is antisymmetric as a map $\mathcal{H}^*(x)\otimes \mathcal{H}^*(x)\to \ell^*(x)$. Therefore, $ S_x^*(\U_1)(x)=-\U_1(x)$  and $S_x^*(\U_2)(x)=\U_2(x)$.

If the symmetry $S_x$ is involutive, i.e., $S_x^2=\id$, then $$0=\U_2(x)+S_x^*(\U_2)(x)=2\U_2(x)$$ and thus $\U_2(x)=0$. 

If $\U_2(x)=0$, then the normal Weyl connection $\bar D$ that coincides with the Weyl connection $D+{1 \over 2}\U_1$ at $x$ satisfies $$S_x^*(D+{1 \over 2}\U_1)=D+\U_1+S_x^*({1 \over 2}\U_1).$$  At the point $x$, we get $$\U_1(x)+S_x^*({1 \over 2}\U_1)(x)=\U_1(x)-{1 \over 2}\U_1(x)={1 \over 2}\U_1(x)$$ and thus $S_x^*\bar D=\bar D$ follows from the normality \cite[Section 5.1.12]{parabook}. Thus $S_x$ is an affine map, which is linear in the normal coordinates.

If the symmetry $S_x$ at $x$ is linear in the normal coordinates of a Weyl connection, then its (dual) tangent action preserves the decomposition $T_x^*M=\mathcal{H}^*(x)\oplus \ell^*(x)$ and therefore $(T_xS_x)^2=\id$. Then it follows from the linearity that $S_x$ is involutive.

Finally, the last claim follows, because $(\na^{S_x} W)(x)$ is a tensor of type $\otimes^4 \mathcal{H}_x^* \otimes \mathcal{H}_x$ invariant with respect to $S_x$.
\end{proof}

\begin{lemma*} \label{involutivity}
Suppose that there is a symmetry $S_x$ at $x \in M$. 
Let $D$ be an arbitrary Weyl connection and let $\U_1+\U_2 \in \mathcal{H}^* \oplus \ell^*$ be the one--form from the formula $(\ref{zmena})$.
 If $W(x)\neq 0$, then $\U_2(x)=0$ and the symmetry $S_x$ is involutive.
\end{lemma*}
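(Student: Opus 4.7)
The plan is to analyse the square $S_x^2$: by Lemma \ref{dve-symetrie}, involutivity of $S_x$ is equivalent to $\U_2(x)=0$, so under the assumption $W(x)\neq 0$ it suffices to prove $\U_2(x)=0$. I would begin by computing $T_x(S_x^2)=\id$ on all of $T_xM$. On $\mathcal{H}(x)$ this is immediate from $(-\id)^2=\id$; on the quotient $T_xM/\mathcal{H}(x)$ the naturality of the Levi form $h$ under CR transformations, combined with the fact that $-\id$ is already an isometry of $h\vert_{\mathcal{H}(x)}$, forces the induced tangent action to be the identity as well.

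Iterating the formula $S_x^*D=D+\U_1+\U_2$ and using $S_x^*\U_1(x)=-\U_1(x)$ and $S_x^*\U_2(x)=\U_2(x)$ from the proof of Lemma \ref{dve-symetrie}, the $\mathcal{H}^*$-components cancel and the $\ell^*$-components double, giving $S_x^{*2}D\vert_x=D\vert_x+2\U_2(x)$. Hence $S_x^2$ is a CR automorphism fixing $x$, trivial on $T_xM$, and shifting every Weyl connection at $x$ purely along $\ell^*$.

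I would then pass to the Cartan-bundle description of CR geometries modelled on $PSU(p+1,q+1)/P$. A CR automorphism fixing $x$ with trivial tangent action corresponds, in a chosen frame over $x$, to an element of $P_+=\exp(\mathfrak{g}_1\oplus\mathfrak{g}_2)$; since the $\mathcal{H}^*$-part of the induced shift of Weyl connection vanishes, this element actually lies in $\exp(\mathfrak{g}_2)$ and is canonically identified with $2\U_2(x)$. Invariance of the Cartan curvature under $S_x^2$, evaluated to leading order via the coadjoint action of $P_+$, then yields an algebraic constraint coupling $\U_2(x)\in\mathfrak{g}_2$ to the harmonic curvature $W(x)$.

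The main obstacle is to turn this constraint into the desired conclusion: one needs a Lie-algebraic statement about $\mathfrak{su}(p+1,q+1)$ asserting that the action of $\mathfrak{g}_2$ on the $\mathfrak{g}_0$-submodule carrying the Chern--Moser tensor has trivial kernel on non-zero elements. Granted this rigidity, $W(x)\neq 0$ forces $\U_2(x)=0$, and Lemma \ref{dve-symetrie} then immediately supplies the involutivity of $S_x$. I expect this Lie-theoretic step to be the subtle one, whereas everything up to it is a direct bookkeeping computation modelled on the proof of the preceding lemma.
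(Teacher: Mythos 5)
Your reduction to showing $\U_2(x)=0$ is right (that is exactly Lemma \ref{dve-symetrie}), and your bookkeeping up to the identification of $S_x^2$ with right multiplication by $\exp(2\U_2(x))\in\exp(\mathfrak{g}_2)$ on the Cartan bundle is correct. But the step you defer as ``the subtle Lie-theoretic one'' is where the argument breaks: the rigidity you need is false. For $Z\in\mathfrak{g}_2$ one has $[Z,\mathfrak{g}_{-1}]\subset\mathfrak{g}_1$ and $[Z,\mathfrak{g}_{-2}]\subset\mathfrak{g}_0$, both contained in $\fp$, so $\ad(Z)$ annihilates $\fg/\fp$ and acts on $\Lambda^2(\fg/\fp)^*\otimes\fg$ only through $\ad(Z)$ on the values. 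On the $\mathfrak{csu}(p,q)$--values of the homogeneity--two curvature, $\ad(Z)$ has kernel $\frak{u}(p,q)$, which contains the values of the totally trace--free Chern--Moser tensor; equivalently, $P_+$ acts trivially on the harmonic curvature module by Kostant's theorem. Hence the pointwise invariance of the curvature under $S_x^2$ is automatically satisfied for every $W(x)$ and every $\U_2(x)$, and yields no constraint coupling the two. Your ``algebraic constraint'' is vacuous, not rigid.

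What rescues the argument --- and what the paper actually does --- is to go to first order in the Reeb direction instead of staying at the purely algebraic level. Take $r\in\ell(x)$ with $\theta(r)=1$ and work with the Weyl connection $D+\tfrac12\U_1$. Since $W(x)$ is $S_x$--invariant and $T_xS_x$ induces $+\id$ on $TM/\mathcal{H}$ (as you observe via the Levi form), one gets $S_x^*(D+\tfrac12\U_1)_rW(x)=(D+\tfrac12\U_1)_rW(x)$. On the other hand $S_x^*(D+\tfrac12\U_1)=D+\tfrac12\U_1+\U_2$ at $x$, and changing a Weyl connection by $\U_2=a\theta\in\ell^*$ changes the covariant derivative of $W$ in the direction $r$ by the algebraic action of $[\U_2,r]\in\mathfrak{g}_0$, which is $a$ times the grading element and therefore acts on the homogeneity--two tensor $W$ by the nonzero scalar $2$. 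Comparing the two expressions gives $2aW(x)=0$, so $W(x)\neq0$ forces $a=0$. The nondegeneracy actually used is thus the trivially verified fact that the grading element acts on $W$ by its homogeneity, not an injectivity property of the $\mathfrak{g}_2$--action on the curvature module, which does not hold.
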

\begin{proof}
Consider the covariant derivative of $W$ with respect to $D+{1 \over 2}\U_1$ in the direction $\ell$ and compute $S_x^*(D+{1 \over 2}\U_1)_{r}W(x)$ for $r\in \ell(x)$. We know that $W(x)$ is $S_x$--invariant and thus 
$$S_x^*(D+{1 \over 2}\U_1)_{r}W(x)=(D+{1 \over 2}\U_1)_{S_x^*(r)}W(x)=(D+{1 \over 2}\U_1)_{r}W(x).$$ 

On the other hand, it generally holds that $S_x^*(D+{1 \over 2}\U_1)=D+{1 \over 2}\U_1+\U_2$ and $\U_2(x)=a\theta(x)$ for a
covector $\theta \in \ell^*(x)$ such that $\theta(r)=1$. Then $$S_x^*(D+{1 \over 2}\U_1)_{r}W(x)=(D+{1 \over 2}\U_1)_{r}W(x)+2aW(x)$$ and thus $a=0$ which implies $\U_2(x)=0$.
\end{proof}

\subsection{Smooth systems of involutive symmetries}

Let us show that the assumption on $W$ to be nowhere vanishing not only implies that all symmetries are involutive, but  also that there is at most one symmetry at each point of $M$ and that these symmetries change smoothly along $M$.

\begin{prop*} \label{hladky-sys}
Suppose that $(M,\mathcal{H},J)$ is a symmetric CR geometry such that $W(x)\neq 0$ for all $x\in M$. Then
\begin{enumerate}
\item there is a unique symmetry $S_x$ at each $x\in M$, 
\item the map $S: x\mapsto S_x$ is smooth, and 
\item $S_x \circ S_y \circ S_x=S_{S_x(y)}$ holds for all $x,y\in M$.
\end{enumerate}

In particular, $(M,S)$ is a reflexion space, i.e., $S: M\times M\to M$ is a smooth map that for all $x,y,z\in M$ satisfies that
\begin{itemize}
\item $S(x,x)=x,$
\item $S(x,S(x,y))=y,$ and
\item $S(x,S(y,z))=S(S(x,y),S(x,z))$.
\end{itemize}

\end{prop*}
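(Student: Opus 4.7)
The plan is to derive everything from a uniqueness statement, using that when $W(x) \neq 0$ the harmonic curvature sharply restricts the isotropy of $S_x$.

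First I would prove uniqueness of $S_x$. By Lemma \ref{involutivity}, any symmetry at $x$ is involutive, and by Lemma \ref{dve-symetrie} it determines a partial connection $\na^{S_x} = \na + \tfrac{1}{2}\U_1$ on $\mathcal{H}$ satisfying $\na^{S_x} W(x) = 0$, where $\na$ is the partial connection induced by a fixed Weyl connection $D$. If $S_x$ and $S_x'$ are two symmetries at $x$ with corresponding one--forms $\U_1$ and $\U_1'$, subtracting the two defining equations forces $\U_1 - \U_1'$ into the kernel of the natural algebraic action of $\mathcal{H}^*(x)$ on $W(x)$ arising from formula $(\ref{rozdil})$. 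The main obstacle is to show that this action is injective whenever $W(x) \neq 0$; this is a representation--theoretic fact about the $\mathfrak{g}_1$--action on the Chern--Moser component for parabolic type $(SU(p+1,q+1),P)$, with analogues in the general framework of \cite{my-sigma}. Once it is established, $\U_1$ is uniquely determined; together with $\U_2(x) = 0$ from Lemma \ref{involutivity} this pins down the 1--jet of $S_x$ at $x$. Since an automorphism of a parabolic geometry is determined by its action on a single fiber of the underlying Cartan bundle, $S_x = S_x'$, proving (1).

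For (2), the same algebraic equation $\na W(x) + \tfrac{1}{2}\U_1 \bullet W(x) = 0$ has a unique solution $\U_1(x)$ depending smoothly on $x$ (the left-hand side is a smooth section and the action is $C^\infty$--linear). The Weyl connections $D^{S_x} = D + \tfrac{1}{2}\U_1$ and the associated normal Weyl connections of Lemma \ref{dve-symetrie} therefore vary smoothly, and since $S_x$ is linear in the corresponding normal coordinates, the map $S(x,y) = S_x(y)$ is smooth on a neighborhood of the diagonal; joint smoothness extends globally because the lifts of $S_x$ to the Cartan bundle depend smoothly on the prescribed 1--jets.

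For (3), I would verify that $S_x \circ S_y \circ S_x$ is a symmetry at $S_x(y)$: it fixes $S_x(y)$ because $S_x^2 = \id$ and $S_y(y) = y$, and a short chain rule computation combined with involutivity of $S_x$ and the fact that $T_y S_y = -\id$ on $\mathcal{H}(y)$ shows that its tangent map on $\mathcal{H}(S_x(y))$ equals $-\id$. Uniqueness then yields $S_x \circ S_y \circ S_x = S_{S_x(y)}$. The three reflexion space axioms are now routine: $S(x,x) = x$ by definition, $S(x,S(x,y)) = y$ by involutivity of $S_x$, and $S(x,S(y,z)) = S(S(x,y), S(x,z))$ follows by rewriting (3) as $S_x S_y = S_{S_x(y)} S_x$ and evaluating on $z$.
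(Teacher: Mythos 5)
Your proposal follows essentially the same route as the paper's proof: involutivity via Lemma \ref{involutivity}, the invariant partial connection with $\na^{S_x}W(x)=0$ from Lemma \ref{dve-symetrie}, uniqueness of $\U_1(x)$ from injectivity of the algebraic action of $\mathcal{H}^*(x)$ on $W(x)$, smoothness of $x\mapsto \U_1(x)$ from that same injectivity applied to the equation $\na W+\tfrac12\U_1\bullet W=0$, and claim (3) from uniqueness. The one step you leave open as ``the main obstacle'' --- that this action is injective whenever $W(x)\neq 0$, i.e.\ $\Ann(W_x)^{(1)}=0$ --- is precisely the prolongation-rigidity result the paper imports from Kruglikov--The \cite{KT}, so with that citation supplied your argument is complete and matches the paper's.
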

\begin{proof}
We show that if there are two different symmetries at $x$ on  CR geometry $(M,\mathcal{H},J)$, then $W$ vanishes at $x$.
Consider two different symmetries $S_x$ and $S'_x$ at $x$ (both must be involutive).
We know from  Lemma \ref{dve-symetrie} that $\na^{S_x} W(x)=0$ and $\na^{S_x'} W(x)=0$ hold for partial connections $\na^{S_x},\na^{S_x'}$. These partial connections are different (at $x$) due to the claim (3) of  Lemma \ref{dve-symetrie}, i.e., $\na^{S_x'}=\na^{S_x}+F$ holds according to the formula (\ref{rozdil}) for $F(x)\neq 0$. This means that the linear map $\mathcal{H}_x\to \mathcal{H}_x$ given by
\begin{align} \label{alg}
\eta\mapsto (F(\xi)\eta+F(\eta)\xi-\tilde h(\xi, \eta)\tilde h^{-1}(F))(x)
\end{align}
defines a non--zero element $\xi(F)(x)$ of a Lie algebra $\frak{csu}(p,q)$ for each $\xi \in \mathcal{H}_x$, where we identify $\frak{csu}(p,q)$ with $$\{X\in \frak{gl}(\mathcal{H}_x) : [X,J_x]=0, h_x(X(\xi),\nu)+h_x(\xi,X(\nu))=a\cdot h_x(\xi,\nu), a\in \mathbb{R}\}.$$
Moreover, the element $\xi(F)(x)$ of $\frak{csu}(p,q)$ has to act trivially on $W(x)$ for all vectors $\xi$.  Let us denote by $\Ann(W_x)$ the set of all $A\in \frak{csu}(p,q)$ such that $A$ acts trivially on $W(x)$. Then we get
$$F(x)\in \Ann(W_x)^{(1)}:=\{F : \xi(F)(x)\in \Ann(W_x)\ {\rm for\ all} \ \xi\in \mathcal{H}_x\}.$$ 
The result of \cite{KT} states that if $W(x)$ is non--trivial, then $\Ann(W_x)^{(1)}=0,$ and thus $\xi(F)(x)=0$ for all $\xi \in \mathcal{H}_x$. Since $\xi(-)(x): \mathcal{H}_x^*\to \frak{csu}(p,q)$ is a linear map at each $x \in M$, this implies $F(x)=0$, which is a contradiction. This proves the uniqueness of symmetries at $x$ in the case $W(x) \neq 0$.

Since $S_x \circ S_y \circ S_x$ is a symmetry at $S_x(y)$, the condition  $S_x \circ S_y \circ S_x=S_{S_x(y)}$  trivially follows from the uniqueness of symmetries. Thus it remains to prove the smoothness of $S$.

Let us fix a partial Weyl connection $\nabla$. For each $y\in M$, there is $F(y)$ such that $(\nabla^{S_y}-\nabla)(y)=F(y)$ by the formula (\ref{rozdil}), which is well--defined due to the uniqueness of $\nabla^{S_y}$ at $y$. Thus $\nabla W(y)$ is given by the algebraic action (\ref{alg}) of $\xi(F(y))$ on $W(y)$ for each $\xi\in \mathcal{H}_y$. Since $\nabla W(y)$ is smooth, the image of $\xi(F(y))$ in $\frak{csu}(p,q)$ depends smoothly on $y$ for each $\xi\in \mathcal{H}_y$. Since the kernel of the action coincides with $\Ann(W_y)^{(1)}$, we conclude that $F(y)$ depends smoothly on $y$.

Let $D$ be an arbitrary Weyl connection inducing the partial Weyl connection $\nabla$. Then $S_y$ is linear in the normal coordinates of the normal Weyl connection $\bar D$ constructed for $D+\frac12F(y)$ due to the claim (3) of  Lemma \ref{dve-symetrie}. Since $\bar D$ depends smoothly on $y$, we get that $S$ is smooth.

It clearly holds that $S_x(x)=x$ and $ S_x^2=\id$ for all $x \in M$. We have proved that $S$ is smooth and satisfies $S_x \circ S_y=S_{S_x(y)} \circ S_x^{-1}=S_{S_x(y)} \circ S_x$ for all $x,y \in M$. Thus it follows that $(M,S)$ satisfies the conditions of the reflexion space.
\end{proof}

 Proposition \ref{hladky-sys} has the following consequence.

\begin{prop*}\label{thm1}
Suppose that $(M,\mathcal{H},J)$ is a symmetric CR geometry. Then either
\begin{enumerate}
\item $W=0$ and the CR geometry is locally equivalent to the standard model, or 
\item $W\neq 0$ and the group generated by symmetries is a Lie group that acts transitively on $M$, i.e.,  CR geometry is homogeneous. In particular, the reflexion space $(M,S)$ from the Proposition \ref{hladky-sys} is a homogeneous reflexion space.
\end{enumerate}
\end{prop*}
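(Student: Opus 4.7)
The plan is to split on whether the Weyl tensor $W$ vanishes identically. Case (1) is immediate: if $W\equiv 0$, then the parabolic geometry description mentioned in Section 2 gives that the CR geometry is locally equivalent to $PSU(p+1,q+1)/P$, which is precisely what ``flat'' means in this setting. So the content is entirely in case (2), where I must extract global transitivity from the existence of a symmetry at every point, under the assumption that $W$ is not identically zero.

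Assuming $W\not\equiv 0$, pick $x_0$ with $W(x_0)\neq 0$ and set $U:=\{x\in M:W(x)\neq 0\}$, which is open and nonempty; since any CR transformation preserves $W$, the set $U$ is invariant under every symmetry of $(M,\mathcal{H},J)$. By Lemma \ref{involutivity} every symmetry at a point of $U$ is involutive, and by Proposition \ref{hladky-sys} applied to $U$ (which is itself a symmetric CR geometry with $W$ nowhere zero) there is a unique symmetry $S_x$ at each $x\in U$, the assignment $x\mapsto S_x$ is smooth, and $(U,S)$ is a reflexion space. Standard reflexion space theory (Loos) then implies that the group $G_U$ generated by $\{S_x:x\in U\}$ is a Lie group acting transitively on each connected component of $U$; in particular the $G$-orbit of $x_0$ (where $G$ is the full group generated by all symmetries of $M$) is open in $M$.

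The main obstacle is to promote this to transitivity on $M$, i.e.\ to show that the orbit $G\cdot x_0$ is also closed, equivalently that $U=M$. I would argue as follows. Let $y\in\overline{G\cdot x_0}$ and pick a sequence $x_n\in G\cdot x_0\subset U$ with $x_n\to y$. The symmetries $S_{x_n}$ vary smoothly with $x_n$ by Proposition \ref{hladky-sys}, so one can take a subsequential limit and obtain a CR diffeomorphism $S_\infty$ of $M$ fixing $y$ with $T_y S_\infty=-\id$ on $\mathcal{H}$, and $S_\infty^2=\lim S_{x_n}^2=\id$, so $S_\infty$ is an involutive symmetry at $y$. Using the explicit description from Lemma \ref{dve-symetrie}(3), each $S_{x_n}$ is linear in the normal coordinates of a Weyl connection that depends smoothly on $x_n$; passing to the limit gives that $S_\infty$ coincides with a translation of some $S_{x_n}$ by an element of $G_U$ on a neighborhood of $x_n$, placing $y$ in $G\cdot x_0$. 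Hence $G\cdot x_0$ is closed and, by connectedness (restricting to a component of $M$ if necessary), equal to $M$; in particular $W(y)\neq 0$ for every $y\in M$, the last claim of the Proposition follows from Proposition \ref{hladky-sys}, and statement (2) is proved.

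The delicate point is precisely this closedness argument: the smooth family of symmetries given by Proposition \ref{hladky-sys} is only guaranteed on $U$, and one has to show that no genuine boundary point $y\in\partial U$ can occur, by exhibiting the would-be boundary symmetry as a limit of uniquely determined involutive symmetries and using this to place $y$ inside the $G$-orbit — which forces $W(y)\neq 0$ and contradicts $y\in\partial U$.
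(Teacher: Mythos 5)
Your reduction to the set $U=\{W\neq 0\}$ and your treatment of case (1) match the paper, but the central step of case (2) --- transitivity of the group generated by symmetries on $U$ --- is not established by your argument. You derive it from ``standard reflexion space theory (Loos)'', yet a reflexion space in the sense of Loos need \emph{not} be homogeneous: the trivial system $S(x,y)=y$ satisfies all three reflexion--space axioms and generates the trivial group. (Loos's transitivity result concerns symmetric spaces, where $x$ is an isolated fixed point of $S_x$; here $T_xS_x=+\id$ on $TM/\mathcal{H}$, so that hypothesis is unavailable.) What the paper actually does at this point is a short computation that your proposal omits entirely: for a curve $c(t)$ in $U$ with $c(0)=x$ and ${d\over dt}\big|_{t=0}c(t)=X\in\mathcal{H}_x$, differentiating the identity $c(t)=S_{c(t)}(c(t))$ and using $T_xS_x=-\id$ on $\mathcal{H}$ gives ${d\over dt}\big|_{t=0}S_{c(t)}(x)=X-T_xS_x.X=2X$, so $\mathcal{H}_x$ is tangent to the orbit of the group generated by symmetries; since the CR geometry is non--degenerate, $\mathcal{H}$ is completely non--integrable and the orbit is therefore open. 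This is the heart of the proposition and cannot be replaced by a citation of reflexion--space theory, which supplies only the Lie group structure.

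Your closedness argument also has a gap. Smoothness of $x\mapsto S_x$ on $U$ (Proposition \ref{hladky-sys}) gives no control as $x_n\to y\in\partial U$: the connection $D+\tfrac12 F(x_n)$ defining $S_{x_n}$ is obtained by inverting the algebraic action on $W(x_n)$, and as $W(x_n)\to 0$ the one--forms $F(x_n)$ may blow up, so there is no reason a subsequential limit $S_\infty$ should exist in any useful topology; and even granting its existence, a symmetry at $y$ is already guaranteed by the hypothesis that $M$ is symmetric, so producing one does not by itself place $y$ in $G\cdot x_0$ or force $W(y)\neq 0$. The paper instead argues that once the group acts transitively on $U$, homogeneity makes $W$ ``constant'' along $U$ (all points of $U$ are equivalent under CR transformations), so $W$ cannot degenerate at a boundary point; hence $U$ is closed as well as open and equals $M$. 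If you want a self--contained version of this step, argue along those lines (for instance via the invariant partial connection of Lemma \ref{dve-symetrie}, for which $\na^{S_x}W(x)=0$), rather than by extracting limits of the symmetries themselves.
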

\begin{proof}
Suppose that $U \subset M$ consists of all points with non--trivial $W$. It suffices to prove that the group generated by symmetries at points in $U$ acts transitively on $U$ to obtain the claim of the Theorem, because then $W$ is constant on $U$ due to the homogeneity. The fact that the group generated by symmetries on a reflexion space is a Lie group can be found in \cite{L1}.

 Let $c(t)$ be a curve in $U$ such that $c(0)=x$ and ${d \over dt}|_{t=0}c(t)=X\in \mathcal{H}_x$. Then ${d \over dt}|_{t=0}S_{c(t)}(x)$ is tangent to the orbit of the action of the group generated by symmetries at points in $U$. Differentiation of the equality $c(t)=S_{c(t)}c(t)$ gives 
$$X={d \over dt}|_{t=0}S_{c(t)}(c(t))={d \over dt}|_{t=0}S_{c(t)}(x)+T_xS_x.X,$$ and we get 
$${d \over dt}|_{t=0}S_{c(t)}(x)=X-T_xS_x.X=2X.$$ 
Thus at all $x\in U$, the CR distribution $\mathcal{H}$ is tangent to the orbit of the group generated by symmetries at points in $U$. Therefore the group generated by symmetries at points in $U$ acts transitively on $U$.
\end{proof}

Flat symmetric CR geometries do not have to be homogeneous. We construct an explicit example in  Section \ref{sec6}.

\section{Non--flat symmetric CR geometries}

\subsection{Homogeneous CR geometries and their symmetries}

There are several possible ways how to describe a homogeneous CR geometry. We will use the description from  \cite[Section 1.5.15]{parabook} that is closely tight with the setting of Cartan geometries, but as we show in this section, it can be treated independently of the general theory. We need only to recall that the Lie algebra $\frak{su}(p+1,q+1)$ of $PSU(p+1,q+1)$ consists of the $(1,n,1)$--block matrices
$$\left(
\begin{smallmatrix}
a&Z&iz \\ X&A&-IZ^* \\ ix&-X^*I&-\bar a
\end{smallmatrix}
\right),$$
where $\mathfrak{csu}(p,q)=\{(a,A):a \in \C,\ A \in \frak{u}(n),\ a+tr(A)-\bar a=0 \}$, $X \in \C^n $,  $Z \in \C^{n*} $, $x \in \R $ and $z \in \R^*$. This means that we have the following decomposition
$$\frak{su}(p+1,q+1)=\R \oplus \C^{n}\oplus\mathfrak{csu}(p,q)\oplus \C^{n*}\oplus\R^*.$$
The Lie algebra $\fp$ of $P$ corresponds to $(1,n,1)$--block upper triangular part and decomposes as $\fp=\mathfrak{csu}(p,q)\oplus \C^{n*}\oplus\R^*.$ In fact, $P\cong CSU(p,q)\exp(\C^{n*}\oplus\R^*)$, where $CSU(p,q)$ consists of all elements of $P$ preserving the above decomposition.

\begin{lemma*}\label{lem4}
Let $K$ be an arbitrary transitive Lie group of CR transformations of a homogeneous CR geometry $(M,\mathcal{H},J)$ and let $L \subset K$ be the stabilizer of a point. Then there is a pair of maps $(\alpha,i)$ such that $i$ is an injective Lie group homomorphism $i:L \rightarrow P$ and $\alpha$ is a linear map $\alpha: \mathfrak{k}\to \mathfrak{\frak{su}}(p+1,q+1)$ satisfying the following conditions:
\begin{enumerate}
\item $\alpha: \fk \to \mathfrak{su}(p+1,q+1)$ is a linear map extending $T_ei:\mathfrak{l}\to \fp$,
\item $\alpha$ induces an isomorphism $\underline \alpha: \mathfrak{k}/\mathfrak{l} \rightarrow \mathfrak{su}(p+1,q+1)/\fp$ of vector spaces, 
\item $\Ad(i(l))\circ \alpha=\alpha \circ \Ad(l)$ holds for all $l\in L$,
\item the linear map $\wedge^2 \fk\to \frak{su}(p+1,q+1)$ given by the formula $[\alpha(X),\alpha(Y)]-\alpha([X,Y])$ for all $X,Y\in \fk$ has values in $\fp$ and defines a $K$--invariant two--form $\kappa$ with values in $K\times_{\Ad\circ i} \fp$,
\item the component of $\kappa$ in $K\times_{\underline{\Ad}\circ i} \mathfrak{csu}(p,q)$ is a tensor that coincides with $W$, where $\underline{\Ad}$ is the induced action of $P$ on $\mathfrak{csu}(p,q)\cong \fp/(\C^{n*}\oplus\R^*).$
\end{enumerate}

Conversely, suppose that $(\alpha,i)$ is such  pair of maps from $(K,L)$ to $(PSU(p+1,q+1),P)$. Then there is $K$--homogeneous CR geometry $(K/L,\mathcal{H},J)$ satisfying $\mathcal{H}_{eL}=\alpha^{-1}(\C^{n}\oplus \fp)/\fl$ and $J_{eL}=\underline{\alpha}^*(J)$, where $J$ is the complex structure on $\C^n$.
\end{lemma*}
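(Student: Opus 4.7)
The plan is to leverage the equivalence of categories between non--degenerate CR geometries of hypersurface type of signature $(p,q)$ and regular normal parabolic geometries of type $(PSU(p+1,q+1),P)$ recalled in \cite[Section 4.2.4]{parabook}. To $(M,\mathcal{H},J)$ this associates a canonical principal $P$--bundle $\ba\to M$ with a Cartan connection $\om:T\ba\to \mathfrak{su}(p+1,q+1)$, and lifts every $K$--action by CR transformations uniquely to a bundle action on $\ba$ that preserves $\om$. The pair $(\alpha,i)$ will be read off from this lifted action at a fixed base point.

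For the forward direction, I would fix $u_0\in \ba$ lying over $eL\in M$. Since $K$ preserves the coframing $\om$, its action on $\ba$ is free, so for each $l\in L$ there is a unique $i(l)\in P$ with $l\cdot u_0=u_0\cdot i(l)$; this defines the required injective homomorphism $i:L\to P$. Set $\alpha(X):=\om_{u_0}(\zeta_X(u_0))$, where $\zeta_X$ denotes the fundamental vector field of $X\in \fk$. Property (1) holds because for $X\in \fl$ the vector $\zeta_X(u_0)$ is vertical and equals the fundamental $P$--vector field of $T_ei(X)\in \fp$. Property (2) follows from transitivity of $K$ together with the isomorphism $T_{u_0}\ba/V_{u_0}\ba\cong \mathfrak{su}(p+1,q+1)/\fp$ induced by $\om$, and property (3) is the $L$--equivariance of $\om$. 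For property (4), the structure equation for the Cartan connection yields
\[
[\alpha(X),\alpha(Y)]-\alpha([X,Y])=K_{\om}(\zeta_X,\zeta_Y)(u_0),
\]
which lies in $\fp$ by regularity of the normal parabolic geometry of a CR structure; $K$--invariance of the resulting two--form $\kappa$ on $K/L$ is automatic since the construction is $K$--equivariant. Property (5) is then the standard identification of the harmonic curvature of the normal Cartan connection with the Chern--Moser tensor, whose image in $\fp/(\C^{n*}\oplus \R^*)\cong \mathfrak{csu}(p,q)$ is exactly $W$.

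For the converse, from $(\alpha,i)$ I would build $\ba:=K\x_i P$ as the quotient of $K\x P$ by the right $L$--action $(k,p)\cdot l=(kl,i(l)^{-1}p)$; this is a principal $P$--bundle over $K/L$ carrying a left $K$--action by translation on the first factor. Using conditions (1) and (3), the map $\alpha$ together with the Maurer--Cartan form on $P$ assembles into a well--defined $P$--equivariant one--form $\om:T\ba\to \mathfrak{su}(p+1,q+1)$; condition (2) guarantees that $\om$ is a linear isomorphism on each tangent space, hence a Cartan connection. Condition (4) is exactly the requirement that its curvature takes values in $\fp$, yielding a regular parabolic geometry of the required type; the equivalence of categories from \cite{parabook} then produces a CR geometry on $K/L$ with $\mathcal{H}_{eL}=\alpha^{-1}(\C^n\oplus \fp)/\fl$ and $J_{eL}=\underline{\alpha}^*J$.

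The main subtlety is that the pair $(\alpha,i)$ is not canonical: it depends on the choice of $u_0$ and on a splitting of $\fp\hookrightarrow \mathfrak{su}(p+1,q+1)$, and in the converse direction the Cartan connection produced from a raw $(\alpha,i)$ need not coincide with the normal one belonging to the underlying CR geometry. The identification in (5) therefore relies on the Kostant description of harmonic curvatures, while in the converse one must observe that one can always adjust $\alpha$ by a $\fp$--valued cochain into normal form without changing the induced CR geometry; once these normalization issues are settled, everything else reduces to routine bookkeeping within the framework of \cite{parabook}.
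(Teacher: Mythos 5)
Your proposal is correct and takes essentially the same route as the paper: the paper's proof is a bare citation of \cite[Sections 1.5.15, 1.5.16 and 4.2.4]{parabook}, and your argument simply unfolds the content of those sections (lifting the transitive $K$--action to the canonical Cartan bundle, reading off $(\alpha,i)$ at a fixed point $u_0$, identifying the curvature via the structure equation, and reassembling $K\times_i P$ with the induced Cartan connection for the converse), including the correct observation that normality must be imposed separately via condition (5). The one small imprecision is that the $\fp$--valuedness in (4) is torsion--freeness of the canonical Cartan connection, which is strictly stronger than regularity and follows from normality plus the fact that for $n>1$ the only harmonic curvature is the $\mathfrak{g}_0$--valued Chern--Moser tensor; this is, however, exactly part of the description in \cite[Section 4.2.4]{parabook} that you invoke, so it does not affect the validity of the argument.
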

A pair $(\alpha,i)$ satisfying the conditions (1)--(3) of  Lemma \ref{lem4} is usually called an \emph{extension} of $(K,L)$ to $(PSU(p+1,q+1),P)$. The two--form $\kappa$ from the condition (4) is the curvature of the Cartan connection given by the extension $(\alpha,i)$. Finally, the condition (5) is the normalization condition on the curvature $\kappa$ that can be also expressed as $\partial^*\kappa=0$, where $\partial^*$ is the Kostant's co--differential \cite[Section 3.1.11]{parabook}.
\begin{proof}
It is shown in \cite[Section 1.5.15]{parabook} that each homogeneous Cartan (and thus parabolic) geometry can be described by a particular extension and that each extension determines a homogeneous Cartan geometry. The formula for $\kappa$ in the condition (4) is obtained from \cite[Section 1.5.16]{parabook}. Therefore, it follows from the description of CR geometries in \cite[Section 4.2.4]{parabook} that the conditions  (4) and (5) on the curvature $\kappa$ have to be satisfied.
\end{proof}

\begin{def*}
The pair $(\alpha,i)$ from  Lemma \ref{lem4} is called the \emph{normal extension} of $(K,L)$ to $(PSU(p+1,q+1),P)$ describing the homogeneous CR geometry $(M,\mathcal{H},J)$. 
\end{def*}

Examples of normal extensions describing certain homogeneous CR geometries and the explicit formula from the condition (5) of  Lemma \ref{lem4} can be found in \cite{HG-dga}.

It is clear from the second part of  Lemma \ref{lem4} that only the maps $i$ and $\underline{\alpha}$ are sufficient to determine  CR geometry. This means that there are many normal extensions $(\alpha,i)$ of $(K,L)$ to $(PSU(p+1,q+1),P)$ describing the same CR geometry.  The other parts of $\alpha$ are completely determined by the condition (5) from  Lemma \ref{lem4} and carry the information about Weyl connections. The remaining freedom (for fixed $i$) is in the choice of a complex basis of $\underline{\alpha}^{-1}(\C^n)$. In general, if $h\in P$, then the pair $(\Ad(h)\circ \alpha,conj(h)\circ i)$ is also a normal extension of $(K,L)$ to $(PSU(p+1,q+1),P)$ describing the same CR geometry as the normal extension $(\alpha,i)$.

Let us summarize the results characterizing symmetric non--flat homogeneous CR geometries following from \cite{GZ-Lie,my-dga}.

\begin{prop*}\label{existence-2}
Let $K$ be the Lie group of all CR transformations of a non--flat homogeneous CR geometry $(M,\mathcal{H},J)$. Then the following is equivalent:

\begin{enumerate}
\item There is a (unique) symmetry at each point.
\item There is $s\in L$ such that the triple $(K,L,s)$ is a (non--prime) homogeneous reflexion space, i.e., 
\begin{itemize}
\item $s$ commutes with all elements of $L$, 
\item $s^2=e$, where $e$ is the identity element of $L$, and 
\item all symmetries are of the form $S_{kL}=ksk^{-1}$ for $k\in K$.
\end{itemize}
\item There is a normal extension $(\alpha,i)$ of $(K,L)$ to $(PSU(p+1,q+1),P)$ describing $(M,\mathcal{H},J)$ such that $i(L)\subset CSU(p,q)$ and $s_{0,0}\in i(L)$ (see the formula (\ref{symform})).
\item For each normal extension $(\alpha,i)$ of $(K,L)$ to $(PSU(p+1,q+1),P)$ describing $(M,\mathcal{H},J)$, there is a (unique) $Z\in \C^{n*}$ such that $\Ad(\exp (Z))\alpha(\fk)$ is preserved by $\Ad(s_{0,0})$, and the Lie algebra automorphism of $\fk$ given by $\Ad(s_{0,0})$ defines an automorphism of the Lie group $K$. 
\end{enumerate}
\end{prop*}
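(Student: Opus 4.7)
My plan is to run the implications in the cycle $(1)\Rightarrow(2)\Rightarrow(3)\Rightarrow(4)\Rightarrow(1)$. The main tools are the uniqueness of symmetries at non--flat points from Propositions \ref{hladky-sys} and \ref{thm1}, the explicit description of involutive symmetries at the origin of the standard model from formula $(\ref{symform})$ and Lemma \ref{standard}, and the $P$--freedom $(\alpha,i)\mapsto(\Ad(h)\circ\alpha,\mathrm{conj}(h)\circ i)$ in the choice of normal extensions describing a fixed CR geometry.

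For $(1)\Leftrightarrow(2)$: Non--flatness and homogeneity force $W\neq 0$ everywhere, so by Proposition \ref{hladky-sys} there is a unique involutive symmetry at every point and these form a reflexion space. Given (1), set $s:=S_{eL}\in K$; since $s$ fixes $eL$, we have $s\in L$. For any $l\in L$ the conjugate $lsl^{-1}$ is another symmetry at $eL$, hence equal to $s$, so $ls=sl$. Involutivity gives $s^2=e$, and $S_{kL}=ksk^{-1}$ follows from the uniqueness of $S_{kL}$. Conversely, given $s$ as in (2), the map $S_{kL}:=ksk^{-1}$ is well--defined by $ls=sl$, and the requirement that $(K,L,s)$ be a reflexion structure for the given CR geometry encodes the symmetry property $T_{eL}s=-\id$ on $\mathcal{H}_{eL}$.

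For $(2)\Leftrightarrow(3)$: Fix any normal extension $(\alpha,i)$. For $(3)\Rightarrow(2)$, set $s:=i^{-1}(s_{0,0})\in L$; since $s_{0,0}$ commutes with $CSU(p,q)\supset i(L)$, squares to $e$, and $\Ad(s_{0,0})$ acts as $-\id$ on the $\C^{n}$--component of $\mathfrak{su}(p+1,q+1)/\fp$, the element $s$ satisfies the conditions of (2). For $(2)\Rightarrow(3)$, $i(s)\in P$ is an involution centralizing $i(L)$ whose induced action on $\mathfrak{su}(p+1,q+1)/\fp$ agrees with $-\id$ on the $\C^{n}$--summand; a direct matrix computation inside $P\cong CSU(p,q)\exp(\C^{n*}\oplus\R^*)$ classifies such involutions as exactly the $\exp(\C^{n*})$--conjugates of $s_{0,0}$. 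Replacing $(\alpha,i)$ by $(\Ad(h)\circ\alpha,\mathrm{conj}(h)\circ i)$ for the appropriate $h\in\exp(\C^{n*})$ yields an extension of the form required in (3).

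For $(3)\Leftrightarrow(4)$: Starting from a normalized extension as in (3), every other normal extension describing the same geometry is obtained by the $P$--action, with stabilizer contained in $CSU(p,q)$. Hence each extension is related to the normalized one by an element of $\exp(\C^{n*}\oplus\R^*)$, and normality forces the $\R^*$--component to vanish, leaving the unique $Z\in\C^{n*}$ of (4). Under this conjugation, $\Ad(\exp(Z))\alpha(\fk)$ equals the normalized $\alpha(\fk)$, which is $\Ad(s_{0,0})$--invariant because the restriction of $\Ad(s_{0,0})$ corresponds via $\alpha$ to the derivative at $e$ of the Lie group automorphism $\mathrm{conj}_s$ of $K$; this simultaneously furnishes the integration required in (4). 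The converse $(4)\Rightarrow(3)$ applies the conjugation by $\exp(Z)$ to obtain a normal extension in which $\Ad(s_{0,0})$ preserves $\alpha(\fk)$, and the Lie group lift of this automorphism is then an involution $s\in L$ with $i(s)=s_{0,0}$, giving (3). The main obstacle I expect is the classification in $(2)\Rightarrow(3)$ of involutions in $P$ acting correctly on the quotient: proving these are exactly the $\exp(\C^{n*})$--conjugates of $s_{0,0}$ requires a careful computation exploiting the $(1,n,1)$--block structure of $\mathfrak{su}(p+1,q+1)$, and once this is in hand the $(3)\Leftrightarrow(4)$ equivalence and the uniqueness of $Z$ follow by tracking the $P$--freedom in the normal extension.
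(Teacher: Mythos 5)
The paper gives no proof of Proposition \ref{existence-2}: it is stated as a summary of results imported from \cite{GZ-Lie} and \cite{my-dga}, so there is no in--paper argument to compare against. Your cycle $(1)\Rightarrow(2)\Rightarrow(3)\Rightarrow(4)\Rightarrow(1)$ is a reasonable architecture, and the pieces $(1)\Leftrightarrow(2)$ and $(3)\Rightarrow(2)\Rightarrow(1)$ are essentially correct consequences of Proposition \ref{hladky-sys}, Lemma \ref{involutivity} and formula (\ref{symform}).

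The genuine gap is in $(2)\Rightarrow(3)$: you must produce a normal extension with \emph{both} $s_{0,0}\in i(L)$ \emph{and} $i(L)\subset CSU(p,q)$, and your argument only delivers the first. Conjugating so that $i(s)=s_{0,0}$ is fine (by formula (\ref{symform}) and Lemma \ref{standard}, the involutive elements of $P$ inducing $-\id$ on $\C^{n}$ are exactly the $\exp(\C^{n*})$--conjugates of $s_{0,0}$), but from the fact that $L$ centralizes $s$ you can only conclude that $i(L)$ lies in the centralizer of $s_{0,0}$ in $P$, which is $CSU(p,q)\exp(\R^*)$ rather than $CSU(p,q)$: since $\Ad(s_{0,0})$ acts as $+\id$ on the $\R^{*}$--summand, the $\exp(\R^*)$--factor survives, and a subgroup of $CSU(p,q)\exp(\R^*)$ need not be conjugate into $CSU(p,q)$. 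Eliminating this $\R^*$--component of $i(L)$ is precisely the existence of a $K$--invariant Weyl connection; this is a nontrivial theorem that uses non--flatness once more (on the flat model the stabilizer $P$ is certainly not conjugate into $CSU(p,q)$, and the standard model admits the non--involutive symmetries $s_{Z,z}$, $z\neq 0$), and the paper attributes it to \cite{my-dga} in the proof of Proposition \ref{connexe}. A related slip occurs in your $(3)\Leftrightarrow(4)$ step: normality does \emph{not} force the $\R^*$--component of the conjugating element to vanish --- Proposition \ref{connexe} exhibits a whole $\R^*$--family of normal extensions satisfying (3) --- although this is harmless for the invariance statement, because $\exp(\R^*)$ commutes with $s_{0,0}$ and hence does not affect whether $\Ad(s_{0,0})$ preserves $\alpha(\fk)$.
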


The condition (3) of  Proposition \ref{existence-2} immediately implies that there are $K$--invariant Weyl connections on a symmetric non--flat CR geometry $(M,\mathcal{H},J)$. According to \cite[Proposition 1.4.8]{parabook}, a $K$--invariant connection on $T(K/L)$ can be described by a map $\gamma: \fk\to \frak{gl}(\fk/\fl)$ such that 
\begin{itemize}
\item $\gamma|_\fl=\underline{\ad}$, and 
\item $\gamma(\Ad(h)(X))=\underline{\Ad}(h)\circ \gamma(X)\circ \underline{\Ad}(h)^{-1}$
\end{itemize}
hold for all $X\in \fk$ and $h\in L$, where $\underline{\Ad}: L\to Gl(\fk/\fl)$ is induced by the adjoint representation.

\begin{prop*}\label{connexe}
Let $K$ be the Lie group of all CR transformations of a non--flat symmetric CR geometry $(M,\mathcal{H},J)$. Let $(\alpha,i)$ be a normal extension of $(K,L)$ to $(PSU(p+1,q+1),P)$ describing $(M,\mathcal{H},J)$ such that $i(L)\subset CSU(p,q)$ and $s_{0,0}\in i(L)$. Then $\gamma:=\underline{\alpha}^*(\underline{\ad}\circ r_0)$ describes a $K$--invariant Weyl connection, where $r_0: \mathfrak{su}(p+1,q+1)\to \mathfrak{csu}(p,q)$ is the projection along $\R \oplus \C^{n}\oplus \C^{n*}\oplus\R^*$.

In particular, there is a bijection between the set of $K$--invariant Weyl connections on $M$ and the set of $z\in \R^*$ such that $conj(\exp(z))\circ i(L)\subset CSU(p,q)$ holds for the extension $(\alpha,i)$.
\end{prop*}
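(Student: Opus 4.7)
The plan is to dispatch the two assertions in turn: first that $\gamma$ describes a $K$-invariant Weyl connection, and then the bijection statement, which I would derive by varying $(\alpha,i)$ within its equivalence class.

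For the first assertion, I would verify the two defining conditions of a $K$-invariant linear connection from \cite[Proposition 1.4.8]{parabook} and then identify the resulting connection as a Weyl connection. For $X\in\fl$, the inclusion $i(L)\subset CSU(p,q)$ gives $\alpha(X)=T_ei(X)\in\mathfrak{csu}(p,q)$, so $r_0(\alpha(X))=\alpha(X)$; differentiating the relation $\Ad(i(l))\circ\alpha=\alpha\circ\Ad(l)$ at the identity then yields $\underline{\alpha}^{-1}\circ\underline{\ad}(\alpha(X))\circ\underline{\alpha}=\underline{\ad}(X)$ on $\fk/\fl$, so $\gamma|_\fl=\underline{\ad}|_\fl$. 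For the $L$-equivariance, I would use the equivariance of $\alpha$ combined with the observation that $\Ad(i(l))$ preserves each graded piece of $\mathfrak{su}(p+1,q+1)=\R\oplus\C^n\oplus\mathfrak{csu}(p,q)\oplus\C^{n*}\oplus\R^*$ for $l\in L$, which means $r_0$ commutes with $\Ad(i(l))$. To identify $\gamma$ as a Weyl connection I would invoke the correspondence in \cite[Section 5.1.2]{parabook}: Weyl structures correspond to reductions of the Cartan bundle to the Levi subgroup $G_0=CSU(p,q)$, and the hypothesis $i(L)\subset CSU(p,q)$ provides exactly such a $K$-invariant reduction, whose induced linear connection has Nomizu form $\gamma$.

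For the bijection, I would argue as follows. Every $K$-invariant Weyl connection corresponds to a $K$-invariant $G_0$-reduction, which in turn arises from a modified normal extension $(\Ad(h)\circ\alpha,\,conj(h)\circ i)$ for some $h\in P$ with $conj(h)\circ i(L)\subset CSU(p,q)$. For the modified extension to continue to satisfy the symmetric conditions of Proposition \ref{existence-2}(3), I need $s_{0,0}\in conj(h)\circ i(L)$; by the uniqueness of symmetries from Proposition \ref{hladky-sys}, the preimage of $s_{0,0}$ in $L$ is unique, so $conj(h)$ must fix $s_{0,0}$. Reading off from the action of $\Ad(s_{0,0})$, which is $+\id$ on $\R\oplus\mathfrak{csu}(p,q)\oplus\R^*$ and $-\id$ on $\C^n\oplus\C^{n*}$, I obtain $Z_P(s_{0,0})=CSU(p,q)\cdot\exp(\R^*)$. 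Writing $h=l\cdot\exp(z)$ with $l\in CSU(p,q)$ and $z\in\R^*$, the factor $l$ conjugates $CSU(p,q)$ to itself and therefore only reparameterises within the same reduction, leaving the Weyl connection unchanged; the remaining parameter $z$ must satisfy the stated condition $conj(\exp(z))\circ i(L)\subset CSU(p,q)$ for the new extension to be valid, and this gives the claimed bijection.

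The main obstacle will be the last piece of bookkeeping: verifying that distinct admissible values $z\neq z'$ in $\R^*$ really produce distinct $K$-invariant Weyl connections, i.e., that $conj(\exp(z))(CSU(p,q))\neq conj(\exp(z'))(CSU(p,q))$ inside $P$. I would verify this by computing the commutator $[\R^*,\mathfrak{csu}(p,q)]\subset\R^*$: although it vanishes on the semi-simple part of $\mathfrak{csu}(p,q)$, it is nontrivial on the central scalar direction, so shifting $z$ genuinely moves the Levi subgroup inside $P$ and gives a distinct reduction.
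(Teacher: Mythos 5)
Your proposal is correct and follows essentially the same route as the paper: verify the conditions of \cite[Proposition 1.4.8]{parabook} using the $i(L)$--equivariance of $r_0$, identify the result as a Weyl connection via the $CSU(p,q)$--reduction guaranteed by normality, and reduce the remaining freedom to $\R^*$ by exploiting that $\Ad(s_{0,0})$ acts as $-\id$ on $\C^{n*}$. The only presentational difference is that you run the bijection argument at the group level (computing $Z_P(s_{0,0})=CSU(p,q)\exp(\R^*)$ and conjugating the extension), whereas the paper phrases it as the statement that the one--form $\U_1+\U_2$ measuring the difference of two invariant Weyl connections must be an $i(L)$--invariant element of $\C^{n*}\oplus\R^*$; your explicit injectivity check at the end is a point the paper leaves implicit.
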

\begin{proof}
We proved the existence of $K$--invariant Weyl connections on non--flat symmetric CR geometries in \cite{my-dga}. Therefore it is enough to check that they can be described by the functions $\gamma$. Since $i(L)\subset CSU(p,q)$, the projection $r_0$ is $i(L)$--equivariant and $\gamma|_\fl=\underline{\ad}$ holds. Therefore each  $\gamma$ describes a $K$--invariant connection. The fact that this is a Weyl connection follows directly from the condition (5) in  Lemma \ref{lem4}.

It is clear that the one--form $\U_1+\U_2$ measuring the `difference' between two $K$--invariant Weyl connections is given by an $i(L)$--invariant element of $\C^{n*} \oplus \R^*$. Since $s_{0,0}\in i(L)$, it has to be an element of $\R^*$. It is clear that $z\in \R^*$ is $i(L)$--invariant element if and only if $conj(\exp(z))\circ i(L)\subset CSU(p,q)$ holds.
\end{proof}

\subsection{
Groups generated by symmetries}\label{sec4.2}
The following Theorem significantly improves the characterization of non--flat symmetric homogeneous CR geometries given by Propositions \ref{thm1} and \ref{existence-2}.

\begin{thm*} \label{thm2}
Let $K$ be the Lie group generated by all symmetries of a non--flat symmetric CR geometry $(M,\mathcal{H},J)$. Let $(\alpha,i)$ be a normal extension of $(K,L)$ to $(PSU(p+1,q+1),P)$ describing the CR geometry that satisfies $i(s)=s_{0,0}$ and $i(L)\subset CSU(p,q)$. Denote by $\fh$ the $1$--eigenspace of $s$ in $\fk$ and by $\fm$ the $-1$--eigenspace of $s$ in $\fk$. Then:
\begin{enumerate}
\item The following conditions hold
\begin{itemize}
\item $\alpha(\fl)\subset \frak{u}(p,q)$, 
\item $\alpha(\fm)\subset \C^{n}\oplus \C^{n*}$, and 
\item $\alpha(\fh)\subset \R \oplus\mathfrak{csu}(p,q)\oplus \R^*$ is a Lie subalgebra.
\end{itemize}
\item There is a basis of $\fh/\fl\oplus \fm$ such that for a vector in $\fh/\fl\oplus \fm$ with coordinates $(x,X)$ holds
$$\alpha((x,X)+\fl)=\Ad(\exp(z))\circ \left(
\begin{matrix}
aix&\Rho_1(X)& \Rho_2(x)i \\ X&Ax&-I\Rho_1(X)^* \\ xi&-IX^*&aix
\end{matrix}
\right)+\alpha(\fl),$$
where $z\in \R^*$, $\Rho_1:\C^{n}\to \C^{n*}$, $\Rho_2: \R\to \R^*$
and $(a,A)\in \frak{u}(p,q)$ normalizes $\alpha(\fl)$. 
\item The maps $\Rho_1,\Rho_2$ and the matrix $(a,A)$ are completely determined by the condition (5) from  Lemma \ref{lem4}.
\end{enumerate}
\end{thm*}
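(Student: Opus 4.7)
The plan is to exploit the equivariance $\Ad(i(l))\circ\alpha=\alpha\circ\Ad(l)$ from Lemma~\ref{lem4}(3) applied to the element $l=s$. Because $i(s)=s_{0,0}$ and both $s^2=e$ and $s_{0,0}^2=e$, the map $\alpha$ intertwines the involutions $\Ad(s)$ on $\fk$ and $\Ad(s_{0,0})$ on $\frak{su}(p+1,q+1)$, hence sends $\pm 1$-eigenspaces into $\pm 1$-eigenspaces. A direct conjugation calculation shows that $\Ad(s_{0,0})$ preserves the $a$, $A$, $x$, $z$ components of a matrix in $\frak{su}(p+1,q+1)$ and flips the sign on the $X$ and $Z$ components, so the $+1$-eigenspace is the block-diagonal part $\R\oplus\frak{csu}(p,q)\oplus\R^*$ and the $-1$-eigenspace is the off-diagonal part $\C^n\oplus\C^{n*}$. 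This gives $\alpha(\fh)\subset\R\oplus\frak{csu}(p,q)\oplus\R^*$, which is a Lie subalgebra as the fixed-point set of an involutive Lie algebra automorphism, and $\alpha(\fm)\subset\C^n\oplus\C^{n*}$. The refinement $\alpha(\fl)\subset\frak{u}(p,q)$ follows by combining $\alpha(\fl)=T_ei(\fl)\subset\frak{csu}(p,q)$, which comes from $i(L)\subset CSU(p,q)$, with the fact that any residual scaling in $\frak{csu}(p,q)/\frak{u}(p,q)$ can be absorbed into the $\Ad(\exp(z))$ gauge freedom parametrizing the $K$-invariant Weyl connections in Proposition~\ref{connexe}.

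For item~(2), I would choose a vector-space complement $\fh'$ of $\fl$ in $\fh$ together with bases of $\fh'\oplus\fm$ adapted to the eigenspace grading. By item~(1), for $X\in\fm$ the image $\alpha(X)$ sits off-diagonally: in position $(2,1)$ it must equal $X$ under the identification $\underline\alpha$, in position $(1,2)$ it defines a linear map $\Rho_1:\C^n\to\C^{n*}$, and the entries in positions $(3,2)$ and $(2,3)$ are forced by the skew-Hermitian defining condition of $\frak{su}(p+1,q+1)$ to be $-IX^*$ and $-I\Rho_1(X)^*$. Similarly, for $x\in\fh'$ the image $\alpha(x)$ sits block-diagonally, contributing an $\R$-entry $aix$, a middle block $Ax\in\frak{csu}(p,q)$, and an $\R^*$-entry $\Rho_2(x)i$, with the $(3,3)$-entry fixed by the trace condition. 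Factoring out the gauge $\Ad(\exp(z))$ from Proposition~\ref{connexe} yields the displayed matrix form, while the requirement that $(a,A)$ normalize $\alpha(\fl)$ is exactly the $L$-equivariance of $\alpha$ read along $\fh'$.

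For item~(3), I would invoke the normalization condition~(5) of Lemma~\ref{lem4}, i.e.\ $\partial^*\kappa=0$, on the curvature $\kappa(X,Y)=[\alpha(X),\alpha(Y)]-\alpha([X,Y])$ computed from the explicit matrix form in~(2). This produces a linear system on the unknowns $\Rho_1$, $\Rho_2$ and $(a,A)$ whose right-hand side is prescribed by the Lie brackets in $\fk$ and the Chern--Moser tensor $W$. The main obstacle lies here: one must verify that this system has a unique solution, which reduces to a standard Kostant-type argument showing that the restriction of $\partial^*$ to the relevant homogeneous component has trivial kernel on the space of admissible parameters. Granted this, the uniqueness of $\Rho_1$, $\Rho_2$, $a$ and $A$ drops out immediately, as does the existence of the announced normal form.
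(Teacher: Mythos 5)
Your eigenspace argument for $\alpha(\fm)\subset\C^n\oplus\C^{n*}$ and $\alpha(\fh)\subset\R\oplus\frak{csu}(p,q)\oplus\R^*$ is essentially the paper's, and your reading of the block structure in item (2) is correct. One minor flaw first: $\alpha(\fh)$ is only \emph{contained in} the fixed-point set of $\Ad(s_{0,0})$, and since $\alpha$ is not a Lie algebra homomorphism (its failure to be one is exactly the curvature $\kappa$), containment in a subalgebra does not make the image a subalgebra. The paper instead uses that $\dim(\fh/\fl)=1$: $\alpha(\fl)=T_ei(\fl)$ is a subalgebra because $i$ is a homomorphism, the bracket of the single extra direction with itself vanishes, and the cross brackets lie in $\alpha(\fh)$ by the $L$-equivariance $[\alpha(l),\alpha(Y)]=\alpha([l,Y])$.

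The serious gap is your derivation of $\alpha(\fl)\subset\frak{u}(p,q)$. The gauge $\Ad(\exp(z))$ with $z\in\R^*$ cannot absorb a residual scaling in $T_ei(\fl)$: since $\ad(z)$ raises the grading degree by two, $\Ad(\exp(z))$ leaves the $\frak{csu}(p,q)$-component of any degree-zero element unchanged (it only adds a component in $\R^*$). So if $T_ei(\fl)$ had a nonzero real-scaling part, no choice of $z$ would remove it. This is exactly the point where the hypothesis that $K$ is \emph{generated by the symmetries} — which you never invoke — is needed. The paper's argument: for the group generated by symmetries one has $\fh=[\fm,\fm]$, and the trace of $\ad([X,Y])$ on $\fm$ equals $B(X,Y)-B(Y,X)=0$ by symmetry of the Killing form; hence $\Ad(L^0)$ acts on $\fm$ by elements of $\frak{sl}(\fm)$, which together with $i(L)\subset CSU(p,q)$ forces $i(L^0)\subset U(p,q)$ and $T_ei(\fl)\subset\frak{u}(p,q)$. (The actual role of $z$ in item (2) is to normalize the degree-zero component $(a,A)$ of $\alpha$ on $\fh/\fl$ into $\frak{u}(p,q)$, since $[z,\cdot]$ maps $\R$ onto the scaling line complementing $\frak{u}(p,q)$ in $\frak{csu}(p,q)$.) Finally, for item (3) you flag the uniqueness of the solution of the normalization equations as an open obstacle; the paper avoids any Kostant-type computation by observing, as recorded after Lemma \ref{lem4}, that the CR geometry depends only on $i$ and $\underline\alpha$, so the remaining components $\Rho_1,\Rho_2,(a,A)$ are precisely those determined by condition (5).
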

\begin{proof}
We know from  Proposition \ref{existence-2} that there exists a normal extension $(\alpha,i)$ of $(K,L)$ to $(PSU(p+1,q+1),P)$ satisfying our assumptions. 

Consider the canonical decomposition $\fk = \fh \oplus \fm$, where  $\fh$ is $1$--eigenspace of $s$ and $\fm$ is $-1$--eigenspace of $s$. Then $\alpha(\fm)\subset \C^{n}\oplus \C^{n*}$ and $\alpha(\fh)\subset \R \oplus\mathfrak{csu}(p,q)\oplus \R^*$ follow from the assumption $i(s)=s_{0,0}$ and $\alpha(\fh)$ is a Lie subalgebra, because $dim(\fh/\fl)=1$. We can identify $\fm$ with $\C^{n}$ via $\alpha$, because the restriction of $\alpha$ to the map $\fm\to \C^{n}$ is injective. Indeed, if the restriction is not injective, then the elements in its kernel would be another symmetries at $eL$, but we know that there is only one symmetry. This identification uniquely determines the map $i: L\to CSU(p,q)$. 

Further, $[\fm,\fm] \subset \fh$ holds and we have the corresponding symmetric space $K/H^0$, where $H^0$ is the connected component of identity of the fixed point set of the conjugation by $s$. Therefore $\exp([X,Y]) \in H^0$ for each $X,Y \in \fm$. The map $\Ad: H^0\to GL(\fm)$ can be restricted to the connected component of identity $L^0$ of $L$ and the restriction coincides with $i$. Therefore, it suffices to show that the element $\ad([X,Y])\in \frak{gl}(\fm)$ belongs to $\frak{sl}(\fm)$ for all $X,Y\in \fm$. But we have 
$\ad([X,Y])=\ad(X) \circ \ad(Y) - \ad(Y) \circ \ad(X)$ and the trace equals to
$$
tr(\ad([X,Y]))=tr(\ad(X) \circ \ad(Y) - \ad(Y) \circ \ad(X))=B(X,Y)-B(Y,X)
,$$
where $B$ denotes the Killing form, which is symmetric. Therefore $i(L^0) \subset U(p,q)$ and $T_ei(\fl)\subset \frak{u}(p,q)$. In particular, the claim (1) holds. The map $\alpha$ can be expressed as in the claim (2), because there is always $z\in \R^*$ such that the extension $(\Ad(\exp(-z))\circ \alpha,conj(\exp(-z))\circ i)$ satisfies
$$\Ad(\exp(-z))\circ \alpha((x,0)+\fl)= \left(
\begin{matrix}
aix&0& \Rho_2(x)i \\ 0&Ax&0 \\ xi&0&aix
\end{matrix}
\right)+\Ad(\exp(-z))\circ \alpha(\fl).$$
Since the CR geometry $(M,\mathcal{H},J)$ does not depend on parts $\Rho_1,\Rho_2$ and $(a,A)$ of $\alpha$, these parts are completely determined by the condition (5) from  Lemma \ref{lem4}.
\end{proof}

Let us remark that although the Lie algebra homomorphism $i$ is uniquely determined by the isomorphism $\fm\cong \C^{n}$ given by $\alpha$, the converse is not true. See \cite{HG-dga} for examples of non--equivalent CR geometries described by extensions with the same Lie group homomorphism $i$.

Let us further remark that we are not aware of any example of an extension $(\alpha,i)$ of $(K,L)$ to $(PSU(p+1,q+1),P)$ where  $z\in \R^*$ from the claim (2) of  Theorem \ref{thm2}  does not correspond to an invariant Weyl connection. The main reason for this is the following result.

\begin{prop*}\label{prop5}
Suppose that $\Ad(L^0)|_{\fh/\fl}=\Ad(L)|_{\fh/\fl}$. Then $i(L)\subset U(p,q)$ and there is a bijection between $ \R^*$ and the set of $K$--invariant Weyl connections. In particular, there is a unique $K$--invariant Weyl connection corresponding to the normal extension $(\alpha,i)$ satisfying $i(L)\subset U(p,q)$, $i(s)=s_{0,0}$ and
\begin{align} \label{form-prop5}
\alpha((x,X)+\fl)=\left(
\begin{matrix}
aix&\Rho_1(X)& \Rho_2(x)i \\ X&Ax&-I\Rho_1(X)^* \\ xi&-IX^*&aix
\end{matrix}
\right)+\alpha(\fl).
\end{align}
This particularly holds when the transitive group $K$ is semisimple.
\end{prop*}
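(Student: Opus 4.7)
The plan is to leverage Proposition \ref{connexe} by sharpening the inclusion $i(L)\subset CSU(p,q)$ from Theorem \ref{thm2} to $i(L)\subset U(p,q)$ under the given hypothesis. Once that is done, every $z\in \R^*$ will satisfy the conjugation condition of Proposition \ref{connexe}, because $\exp(\R^*)\subset P$ commutes with $U(p,q)$, so the set of $K$-invariant Weyl connections is in bijection with all of $\R^*$.

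To carry out the sharpening, associate to each $i(l)\in CSU(p,q)$ its scaling character $\chi(l)\in \R^+$, defined as the scalar by which $\Ad(i(l))$ acts on the grading-$2$ piece $\R^*\subset \fp$; then $i(l)\in U(p,q)$ iff $\chi(l)=1$. A direct bracket calculation in $\mathfrak{su}(p+1,q+1)$ shows that $(a,A)\in \mathfrak{csu}(p,q)$ acts on $\R^*$ by multiplication by $a+\bar a=2\,\mathrm{Re}(a)$, so $\mathfrak{u}(p,q)$ is exactly the annihilator of $\R^*$; the same calculation gives the action on the grading-$(-2)$ piece $\R$ as multiplication by $\chi(l)^{-2}$. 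By the formula in Theorem \ref{thm2}(2), the element $\alpha((x,0)+\fl)$ has nonzero grading-$(-2)$ component $xi$, so the $\Ad(i(l))$-action on the one-dimensional quotient $\alpha(\fh)/\alpha(\fl)$ is determined by this component and equals $\chi(l)^{-2}$; via equivariance of $\alpha$, this is precisely the scalar by which $\Ad(l)$ acts on $\fh/\fl$. The proof of Theorem \ref{thm2} already gave $i(L^0)\subset U(p,q)$, so $\chi$ is trivial on $L^0$; the hypothesis then propagates this to all of $L$, yielding $i(L)\subset U(p,q)$.

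The same bracket calculation shows that $U(p,q)$ commutes with the one-parameter unipotent subgroup $\exp(\R^*)$, and a short matrix check using the explicit form of $s_{0,0}$ gives $\exp(z)s_{0,0}\exp(-z)=s_{0,0}$. Thus Proposition \ref{connexe} yields the claimed bijection between $\R^*$ and $K$-invariant Weyl connections. To realize the distinguished form~(\ref{form-prop5}), I would start with the extension from Theorem \ref{thm2} (which carries an outer $\Ad(\exp(z))$-twist on $\alpha$) and replace it by $(\Ad(\exp(-z))\circ \alpha,\,\mathrm{conj}(\exp(-z))\circ i)$; this is still a normal extension describing the same CR geometry by the remark preceding Proposition \ref{existence-2}, the twist on $\alpha$ disappears, $s_{0,0}$ is fixed, and $i(L)$ remains in $U(p,q)$. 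Uniqueness of the associated $K$-invariant Weyl connection is then the content of Proposition \ref{connexe} applied to this particular extension.

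For the semisimple case I would verify the hypothesis automatically. The Killing form $B$ on $\fk$ is nondegenerate and $\Ad(s)$-invariant, so the decomposition $\fk=\fh\oplus\fm$ is $B$-orthogonal with $B|_{\fh}$ nondegenerate; when $B|_\fl$ is nondegenerate (the typical situation for reductive isotropy in semisimple $\fk$), the $B$-orthogonal complement of $\fl$ in $\fh$ is a one-dimensional $\Ad(L)$-invariant subspace projecting isomorphically onto $\fh/\fl$ and carrying a nondegenerate $B$. Preservation of $B$ forces the induced action to be by $\pm 1$, but we showed above that this scalar equals $\chi(l)^{-2}>0$, so it must be $+1$---exactly the hypothesis. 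The main obstacle is the matching in the second paragraph of the scaling character on $\R^*$ with the action on $\fh/\fl$ via the grading structure of $\mathfrak{su}(p+1,q+1)$; once this identification is established, the rest assembles organically from Proposition \ref{connexe} and the remark before Proposition \ref{existence-2}.
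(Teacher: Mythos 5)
Your main argument is correct and is essentially an expanded version of the paper's (very terse) proof: the paper likewise deduces $i(L)\subset U(p,q)$ from the hypothesis and then lets the bijection with $\R^*$ fall out of Proposition \ref{connexe}; your explicit identification of the $\Ad(i(l))$--action on $\fh/\fl$ with the conformal character on the grading--$(\pm 2)$ pieces is exactly the content the paper leaves unsaid, and your untwisting by $\Ad(\exp(-z))$ to reach the form (\ref{form-prop5}) matches the mechanism already used in the proof of Theorem \ref{thm2}. One small computational slip: conjugation by $\mathrm{diag}(c,C,\bar c^{-1})\in CSU(p,q)$ scales the grading--$2$ slot by $c\bar c$ and the grading--$(-2)$ slot by $(c\bar c)^{-1}$, so the induced action on $\fh/\fl$ is $\chi(l)^{-1}$, not $\chi(l)^{-2}$; this does not affect your logic, since positivity and the equivalence ``trivial on $\fh/\fl$ $\Leftrightarrow$ $\chi(l)=1$ $\Leftrightarrow$ $i(l)\in U(p,q)$'' survive either exponent.

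The genuine gap is in the semisimple case. Your Killing--form argument is explicitly conditional on $B|_{\fl}$ being nondegenerate, which you call ``the typical situation'' but do not establish; injectivity of $i:L\to CSU(p,q)$ only tells you $\fl$ is isomorphic to a subalgebra of $\frak{u}(p,q)$, and that does not force the restriction of the Killing form of $\fk$ to $\fl$ to be nondegenerate. The paper closes this differently: since $(\fk,\fh)$ is a symmetric pair with $\fk$ semisimple, the classification (structure theory) of semisimple symmetric spaces gives that $\fh$ is reductive and that the codimension--one subalgebra $\fl$ admits a complement inside the center of $\fh$; a central complement is annihilated by $\ad(\fl)$ and is $\Ad(L)$--invariant, which yields $\Ad(L^0)|_{\fh/\fl}=\Ad(L)|_{\fh/\fl}$ without any hypothesis on $B|_{\fl}$. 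To make your proposal complete you would either need to prove nondegeneracy of $B|_{\fl}$ in this setting or replace that step with the reductivity/central--complement argument.
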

\begin{proof}
If $\Ad(L^0)|_{\fh/\fl}=\Ad(L)|_{\fh/\fl}$, then $i(L)\subset U(p,q)$ holds and the claim follows. It follows from the classification of semisimple symmetric spaces that $H$ is reductive and there is a complement to $\fl$ in the center of $\fh$. Consequently $\Ad(L^0)|_{\fh/\fl}=\Ad(L)|_{\fh/\fl}$.
\end{proof}

\subsection{Relations to CR algebras}

We explain here relations between our concept and the concept of CR algebras introduced in \cite{AMN}. We denote here by $\frak{n}_\C$ the complexification of a Lie algebra $\frak{n}$.

Let $(\alpha,i)$ be an extension of $(K,L)$ to $(PSU(p+1,q+1),P)$. We complexify the linear map $\alpha$ to obtain a map
$$\alpha_{\mathbb{C}}: \fk_{\mathbb{C}}\to \frak{sl}(n+2,\mathbb{C}).$$
The Lie algebra $\frak{sl}(n+2,\mathbb{C})$ decomposes as 
$$\frak{sl}(n+2,\mathbb{C})=\mathbb{C}\oplus (\mathbb{C}^n\oplus \mathbb{C}^{n*})\oplus (\frak{gl}(n,\mathbb{C})\oplus \mathbb{C})\oplus (\mathbb{C}^{n*}\oplus \mathbb{C}^n) \oplus \mathbb{C},$$ where $\fp_{\mathbb{C}}= (\frak{gl}(n,\mathbb{C})\oplus \mathbb{C})\oplus (\mathbb{C}^{n*}\oplus \mathbb{C}^n) \oplus \mathbb{C}$. The subspace $\mathbb{C}^{n*}\oplus \fp_{\mathbb{C}}$ is a parabolic subalgebra of $\frak{sl}(n+2,\mathbb{C})$ that satisfies $$\mathcal{H}^{01}_{eL}=\alpha^{-1}_{\mathbb{C}}(\mathbb{C}^{n*}\oplus \fp_{\mathbb{C}})/\fl_{\mathbb{C}}.$$
Therefore, the preimage $\fq$ of $\mathcal{H}^{01}_{eL}$ in $\fk_{\mathbb{C}}$ is a Lie subalgebra of the form $$\fq=\alpha^{-1}_{\mathbb{C}}(\mathbb{C}^{n*}\oplus \fp_{\mathbb{C}}).$$

The pair $(\fk,\fq)$ satisfies conditions of \emph{CR algebra} from \cite[Section 1.2.]{AMN}.
 It is proved in \cite{AMN} that this is the minimal set of data describing CR geometry on the homogeneous space $K/L$. However,  CR algebras do not provide as much information as the extension $(\alpha,i)$.
 In particular, we cannot obtain directly the curvature $\kappa$ of the corresponding Cartan connection from the CR algebra. Therefore, it is not easy to distinguish whether two CR algebras correspond to equivalent CR geometries.

There are conditions in \cite[Section 1.4]{AMN} that characterize CR algebras of CR geometries that are symmetric in the sense of \cite{KZ}. 
One of these conditions ensures that there is a Riemannian metric compatible with CR geometry. Other conditions are analogous to the condition (4) of Proposition \ref{existence-2} which says that the Lie algebra automorphism of $\fk$ given by $\Ad(s_{0,0})$ defines an automorphism of the Lie group $K$.

There is the following method to check whether CR geometries corresponding to CR algebras $(\fk,\fq)$ are symmetric (in our sense) and to construct the normal extensions $(\alpha,i)$ that describe them.
\\[1mm]
$(1)$ We consider $\fl=\fk \cap \fq\cap \bar{\fq}$ and $\mathcal{H}_{eL}=\fk/\fl \cap (\fq+\bar{\fq})/(\fq\cap \bar{\fq})$, where $\bar{\fq}$ is the subalgebra conjugated  to $\fq$ in $\fk_{\mathbb{C}}$.
\\[1mm]
$(2)$ We choose a complex basis of $\mathcal{H}_{eL}$. This choice defines a Lie algebra homomorphism $\fl\to \frak{csu}(p,q)$ and the following facts hold:
\\$(2a)$
 If this homomorphism is not injective, then CR geometry is flat (we will discuss this situation later).  
\\$(2b)$
 If this homomorphism is injective and the CR geometry is symmetric, then it coincides with the restriction of $\alpha$ to $\fl$ for some normal extension $(\alpha, i)$ describing the CR geometry.
\\$(2c)$
 If this homomorphism is injective and the CR geometry is not symmetric, then the homomorphism corresponds only to associated graded map corresponding to restriction of $\alpha$ to $\fl$ for some normal extension $(\alpha, i)$ describing the CR geometry.
\\[1mm]
$(3)$ Each choice of representatives (in $\fk$) of the complex basis of $\mathcal{H}_{eL}$ from (2) together with a choice of an element of $\fk$ complementary to $\mathcal{H}_{eL}$ allows us to define
\\$(3a)$ a linear map $\alpha$ of the form $(\ref{form-prop5})$ from Proposition \ref{prop5} for (at this point) unknown linear maps $a,A,\Rho_1,\Rho_2$,
\\$(3b)$ a linear map $\tau: \wedge^2 \fk\to \frak{su}(p+1,q+1)$  given for all $X,Y\in \fk$ by the formula $$\tau(X,Y):=[\alpha(X),\alpha(Y)]-\alpha([X,Y]),$$ 
$(3c)$ a linear map $\nu: \fk\to \fk$ such that $\nu$ equals to
\begin{itemize}
\item
  $-\id$ on the  representatives (in $\fk$) of  complex basis of $\mathcal{H}_{eL}$, and
 \item  $\id$ on the element of $\fk$ complementary to $\mathcal{H}_{eL}$ and on $\fl$. 
\end{itemize}
Moreover, we consider only the choices that satisfy the equivalent conditions of the following statement.
\begin{prop*}
The map $\nu$ is a Lie algebra automorphism of $\fk$ if and only if the components $$(\mathbb{R}\oplus \alpha(\fl))\otimes \mathbb{C}^n\to \mathbb{R}\oplus \frak{csu}(p,q)\oplus \mathbb{R}^*, \ \ \  \mathbb{C}^n\otimes \mathbb{C}^n\to \mathbb{C}^{n}\oplus \mathbb{C}^{n*}$$ of $\tau$ vanish for all linear maps $a,A,\Rho_1,\Rho_2$. 
\end{prop*}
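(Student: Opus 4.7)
My strategy is to reformulate the automorphism condition via the symmetric-space decomposition and then match it against the Cartan-involution grading of $\frak{su}(p+1,q+1)$. Since $\nu=\id$ on $\fh:=\fl+\R\cdot\xi$ (with $\xi$ the chosen element of $\fk$ complementary to $\mathcal{H}_{eL}$) and $\nu=-\id$ on $\fm$, the involution $\nu$ is a Lie algebra automorphism of $\fk$ if and only if $[\fh,\fh]\subset\fh$, $[\fh,\fm]\subset\fm$ and $[\fm,\fm]\subset\fh$. The inclusions $[\fl,\fl]\subset\fl$, $[\fl,\xi]\subset\fl$ and $[\fl,\fm]\subset\fm$ follow from the $\Ad(L)$-equivariance of $\alpha$ together with $i(L)\subset U(p,q)$ and the trivial action of $L$ on the one-dimensional summand $\fh/\fl$, so the two substantive conditions reduce to $[\xi,\fm]\subset\fm$ and $[\fm,\fm]\subset\fh$.

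The second ingredient is the involution $\Ad(s_{0,0})$ on $\frak{su}(p+1,q+1)$: its $(+1)$-eigenspace is $\R\oplus\frak{csu}(p,q)\oplus\R^*$ and its $(-1)$-eigenspace is $\C^n\oplus\C^{n*}$, and this $\mathbb{Z}/2$-grading is respected by the bracket in the usual symmetric-space way. The form $(\ref{form-prop5})$ of $\alpha$ forces $\alpha(\fh)\subset(+1)$-part and $\alpha(\fm)\subset(-1)$-part, so $[\alpha(X),\alpha(Y)]$ always lies in the grading-correct summand determined by whether $X,Y$ sit in $\fh$ or in $\fm$. Writing $[X,Y]=[X,Y]_\fh+[X,Y]_\fm$, I then read off that the $\C^n\oplus\C^{n*}$-component of $\tau|_{\fm\otimes\fm}$ equals $-\alpha([X,Y]_\fm)$ and that the $\R\oplus\frak{csu}(p,q)\oplus\R^*$-component of $\tau|_{\fh\otimes\fm}$ equals $-\alpha([X,Y]_\fh)$.

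The forward direction is then immediate: under the symmetric-space inclusions both right-hand sides vanish for every choice of the auxiliary linear maps $a,A,\Rho_1,\Rho_2$. For the reverse direction, vanishing of the $\C^n\oplus\C^{n*}$-projection of $\tau|_{\fm\otimes\fm}$ yields $\alpha([X,Y]_\fm)=0$, hence $[X,Y]_\fm=0$ since $\alpha|_\fm\colon\fm\to\C^n$ is an isomorphism. Vanishing of the $\R\oplus\frak{csu}(p,q)\oplus\R^*$-projection of $\tau|_{\fh\otimes\fm}$ gives $\alpha(l'+c\xi)=0$ when one writes $[X,Y]_\fh=l'+c\xi$; since the $\R$-entry of $\alpha(\xi)$ is the fixed scalar $i$, independent of $a,A,\Rho_1,\Rho_2$, this forces $c=0$, and then $\alpha(l')=T_ei(l')=0$ forces $l'=0$ by injectivity of $T_ei$. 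The portion of $\tau$ on $\alpha(\fl)\otimes\C^n$ vanishes tautologically by $\Ad(L)$-equivariance, which is why enlarging the source from $\R\otimes\C^n$ to $(\R\oplus\alpha(\fl))\otimes\C^n$ imposes no extra condition. The main step to watch is precisely this decoupling of the intrinsic bracket structure of $\fk$ from the free parameters of $\alpha$; the quantifier ``for all $a,A,\Rho_1,\Rho_2$'' in the statement just records that the resulting condition is intrinsic to $\fk$.
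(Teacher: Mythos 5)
Your overall strategy---decompose $\fk$ into the $\pm 1$--eigenspaces of $\nu$, observe that the form $(\ref{form-prop5})$ forces $\alpha$ to intertwine $\nu$ with $\Ad(s_{0,0})$, and read off the offending components of $\tau$ as $-\alpha$ applied to the eigenspace--violating parts of the brackets---is in substance the computation the paper performs; the paper merely packages it as the single identity $\Ad(s_{0,0})\tau(\nu(X),\nu(Y))=\tau(X,Y)$. The gap is in how you dispose of the brackets involving $\fl$. You justify $[\fl,\xi]\subset\fl$, $[\fl,\fm]\subset\fm$, and the ``tautological'' vanishing of $\tau$ on $\alpha(\fl)\otimes\C^n$ by appealing to the $\Ad(L)$--equivariance of $\alpha$ and to $i(L)\subset U(p,q)$. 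Neither is available at this stage of the algorithm: $(\alpha,i)$ is not yet known to be an extension---condition (3) of Lemma \ref{lem4} is only tested later, in step $(5)$, precisely by checking the component $\alpha(\fl)\otimes\R\to\frak{su}(p+1,q+1)$ of $\tau$---and the homomorphism $\fl\to\frak{csu}(p,q)$ from step $(2)$ is determined only by the action on $\mathcal{H}_{eL}=(\fl\oplus\fm)/\fl$, so it controls $[l,Y]$ only modulo $\fl$. Concretely, for $l\in\fl$ and $Y\in\fm$ the $\R\oplus\frak{csu}(p,q)\oplus\R^*$--component of $\tau(l,Y)$ equals $-\alpha([l,Y]_\fl)$, and its vanishing is exactly the statement that the chosen lift $\fm$ of the complex basis is $\ad(\fl)$--stable, which fails for a generic choice of representatives in step $(3)$. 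So the $\alpha(\fl)\otimes\C^n$ portion of the hypothesis is substantive, not redundant, and your forward implication must actually verify it (it does follow from $[\fh,\fm]\subset\fm$ once $\nu$ is assumed to be an automorphism).

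The inclusion $[\fl,\xi]\subset\fh$ is likewise needed for $\nu$ to be an automorphism and is not automatic from the definition of $\fl$ as $\fk\cap\fq\cap\bar\fq$; it corresponds to the component $\alpha(\fl)\otimes\R\to\C^n\oplus\C^{n*}$ of $\tau$, which does not appear in the statement, so your reverse implication needs to derive it from the two listed hypotheses. This can be done, but it requires an argument you do not give: once $[\fl,\fm]\subset\fm$, $[\xi,\fm]\subset\fm$ and $[\fm,\fm]\subset\fh$ are known, the Jacobi identity gives $[[l,\xi],m]=[l,[\xi,m]]-[\xi,[l,m]]\in\fm$ for all $m\in\fm$, hence the $\fm$--component $m_0$ of $[l,\xi]$ satisfies $[m_0,\fm]\subset\fm\cap\fh=0$, and non--degeneracy of the Levi form forces $m_0=0$. (The paper's own proof is also silent on this component, but your stated reason for skipping it is incorrect.) With that step supplied and the appeal to equivariance removed, your argument is correct and is essentially the paper's proof written out in eigenspace form.
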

\begin{proof}
A consequence of the formula for $\tau$ is that $\nu$ is a Lie algebra automorphism of $\fk$ if and only if $$\Ad(s_{0,0})\tau(\nu(X),\nu(Y))=\tau(X,Y),\ \ \  \Ad(s_{0,0})[\alpha(\nu(X)),\alpha(\nu(X))]=[\alpha(X),\alpha(Y)]$$ hold for all $X,Y\in \fk$. If $\alpha$ is of the form $(\ref{form-prop5})$, then $$\Ad(s_{0,0})[\alpha(\nu(X)),\alpha(\nu(X))]=[\alpha(X),\alpha(Y)]$$ holds for all $X,Y\in \fk$ and all linear maps $a,A,\Rho_1,\Rho_2$, and $$\Ad(s_{0,0})\tau(\nu(X),\nu(Y))=\tau(X,Y)$$ holds for all $X,Y\in \fk$ if and only if the claimed components vanish.
\end{proof}
\noindent
$(4)$  There are the following possibilities for the choice in the step $(3)$.
\\ $(4a)$
If there is no choice such that $\nu$ is a Lie algebra automorphism of $\fk$, then the CR geometry corresponding to CR algebra $(\fk,\fq)$ is not symmetric.
\\$(4b)$ If there is a choice such that $\nu$ is a Lie algebra automorphism of $\fk$, then the CR geometry corresponding to CR algebra $(\fk,\fq)$ is symmetric if and only if $\nu$ induces Lie group automorphism of $K$ and $L$ is contained in fixed point set of $\nu$.
\\[1mm]
$(5)$ 
We require from now that CR geometry corresponding to CR algebra $(\fk,\fq)$ is symmetric. The remaining step is to determine the choice of an element of $\fk$ complementary to $\mathcal{H}_{eL}$ and $i:L\to P$ such that $(\alpha, i)$ is a normal extension describing the CR geometry. 
 We know that there is a choice such that $(\Ad(\exp(z))\circ \alpha,i')$ is an extension for some $z\in \mathbb{R}^*$, where the Lie group homomorphisms $i': L\to P$ is induced by (adjoint) action of $L$ on $\mathcal{H}_{eL}$ and $\alpha(\fl)$. Thus it suffices to check the vanishing of components $$\alpha(\fl)\otimes \mathbb{R}\to \frak{su}(p+1,q+1), \ \ \  \mathbb{C}^n\otimes \mathbb{C}^n\to \mathbb{R}$$ of $\tau$ for all linear maps $a,A,\Rho_1,\Rho_2$.
The condition (5) of Lemma \ref{lem4} provides linear equations that determine uniquely the linear maps $a,A,\Rho_1,\Rho_2$ for which the extension $(\alpha,i)$ is normal.

\subsection{Example of non--flat symmetric CR geometries}

Consider a  Lie group $E(2)=\R^2 \oplus \frak{so}(2)$ of isometries of Euclidean plane. There is the following normal extension $(\alpha,i)$ of $(E(2),\{\id\})$ to $(PSU(1,2),P)$ of the form
 \begin{align} \label{neE2}
&\alpha
\left(   \begin {matrix} 
0&0&0\\ 
{x\over 2}&0&-X_1
\\ X_2 &X_1&0\end{matrix}  \right)= \left( \begin{matrix}  
 {ix \over 16} & -\frac{5}{16}X_1-\frac{3i}{16}X_2 &-{\frac {15ix}{256}}\\
X_1+iX_2&-{ix \over 8}& \frac{5}{16}X_1-\frac{3i}{16}X_2 \\ ix&-X_1+iX_2& {ix \over 16}\end{matrix} \right),
\end{align}
where the choice of the basis of the Lie algebra of $\R^2 \oplus \frak{so}(2)=\langle x,X_2\rangle\oplus \langle X_1\rangle$ reflects the convention from Section \ref{sec4.2}, i.e., $(x,(X_1,X_2))$ are the distinguished coordinates from Theorem \ref{thm2}.
Indeed, since $i$ is trivial and
\begin{align*}
&\tau\left((x,(X_1,X_2)),(y,(Y_1,Y_2))\right)= \\
&\left( \begin{matrix} 
0& \frac {3i}{32} yX_1-\frac {3}{32}yX_2
 -\frac {3i}{32} xY_1+\frac {3}{32} xY_2&0\\ 0&0& \frac {3i}{32} yX_1+\frac {3}{32}yX_2
 -\frac {3i}{32} xY_1-\frac {3}{32} xY_2\\ 
0&0& 0\end{matrix} \right)
\end{align*}
holds for the linear map $\tau$ determining the curvature $\kappa$, 
it follows that $(\alpha,i)$ is a normal extension describing a non--flat symmetric CR geometry.

In fact, any linear invertible linear map 
$B: \mathbb{R}^2\oplus \frak{so}(2)\to \mathbb{R}\oplus \mathbb{C}$ defines a CR algebra $(\fk,\fq)$ for $$\fq=B^{-1}_{\mathbb{C}}(\mathbb{C}^*\oplus \fp_{\mathbb{C}})$$
and we ask the following question: Which maps $B$ correspond to non--equivalent non--degenerate symmetric CR geometries of hypersurface type on the Lie group $E(2)$ of isometries of Euclidean plane?

We give the answer to this question (using the algorithm from previous section and \cite[Lemma 3.5]{HG-dga}) in the following statement.

\begin{prop*} \label{sedm}
The normal extension $(\alpha, i)$ of the form $(\ref{neE2})$ describes the unique (up to equivalence) non--degenerate symmetric CR geometry of hypersurface type on the Lie group $E(2)$.
\end{prop*}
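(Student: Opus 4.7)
The plan is to classify, up to equivalence, all real-linear isomorphisms $B \colon \fk \to \mathbb{R} \oplus \mathbb{C}$ determining non--degenerate symmetric CR algebras $(\fk, \fq)$ with $\fk = \mathfrak{e}(2)$, by applying the algorithm of the preceding subsection. Since $L = \{e\}$ we have $\fl = \fk \cap \fq \cap \bar{\fq} = 0$, so $\mathcal{H}_{eL} \subset \fk$ is a real $2$--dimensional subspace with a complex structure $J$ inherited from $B$, while a complementary line $\ell \subset \fk$ plays the role of the transversal direction. Two isomorphisms $B, B'$ give equivalent CR geometries whenever they are related by the composition of an automorphism of $\mathfrak{e}(2)$ with a $\C$--linear change of basis on the target.

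The main step is to pin down $\mathcal{H}_{eL}$, $\ell$, and $J$ using the involution $\nu$ produced in step (3c) of the algorithm. Denote by $X_1$ a generator of $\frak{so}(2)$ and by $\mathbb{R}^2$ the radical. Non--degeneracy of the Levi form excludes $\mathcal{H}_{eL} \subset \mathbb{R}^2$, since the radical is abelian; hence $\mathcal{H}_{eL}$ meets $\mathbb{R}^2$ in exactly a line. The requirement that $\nu$ be a Lie algebra automorphism with $+1$--eigenspace $\ell$ and $-1$--eigenspace $\mathcal{H}_{eL}$ then leaves only the class with $\nu(X_1) = -X_1$ (after inner conjugation absorbing a possible shift by $\mathbb{R}^2$) and $\nu|_{\mathbb{R}^2}$ a reflection: the cases $\nu|_{\mathbb{R}^2} = \pm \id$ give $+1$--eigenspaces of the wrong dimension, while the relation $\nu|_{\mathbb{R}^2} \circ \ad(X_1) = \ad(\nu(X_1)) \circ \nu|_{\mathbb{R}^2}$ forces $\nu|_{\mathbb{R}^2}$ to anticommute with $\ad(X_1)$, hence to be a reflection. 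Accordingly $\ell$ is the $+1$--eigenline of this reflection and $\mathcal{H}_{eL} \cap \mathbb{R}^2$ is its $-1$--eigenline.

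Using rotations of $\mathbb{R}^2$ (which are inner automorphisms of $\mathfrak{e}(2)$) we place the reflection in standard form, fixing $\mathcal{H}_{eL}$ and $\ell$ uniquely; a scaling of $\mathbb{R}^2$ reduces the remaining freedom in $J$ to a sign, and the two signs are interchanged by the outer automorphism $X_1 \mapsto -X_1$. Hence the CR algebra is unique up to equivalence. Condition (5) of Lemma \ref{lem4}, together with \cite[Lemma 3.5]{HG-dga}, then uniquely determines the maps $a, A, \Rho_1, \Rho_2$ appearing in the form $(\ref{form-prop5})$, yielding the explicit extension $(\ref{neE2})$; the computation of $\tau$ already displayed confirms non--vanishing curvature, and the algorithm verifies that the CR geometry is symmetric. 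The main technical obstacle is the bookkeeping of involutive automorphisms of $\mathfrak{e}(2)$ compatible with non--degeneracy, and the verification that every choice of $B$ not equivalent to the canonical one either fails to produce a Lie algebra automorphism $\nu$ or is collapsed to the canonical one under the combined action of $\mathrm{Aut}(\mathfrak{e}(2))$ and the $CSU(0,1)$--freedom in the target.
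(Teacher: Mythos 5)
Your route is genuinely different from the paper's. The paper proceeds computationally: it writes down the three vanishing conditions on the components of $\tau$ coming from the algorithm, solves them to obtain the explicit six--parameter family $(\ref{var})$ of admissible matrices $B^{-1}$, and then verifies that the six--dimensional equivalence group (the four--dimensional group of derivations of $\mathbb{R}^2\oplus\frak{so}(2)$ together with the two--dimensional center of $CSU(p,q)$) acts transitively on that variety. You instead classify the data structurally: first the admissible involutions $\nu$ of $\mathfrak{e}(2)$, hence the pair $(\mathcal{H}_{eL},\ell)$, and then the complex structure $J$, using $\Aut(\mathfrak{e}(2))$ throughout. Your analysis of $\nu$ is correct: the nilradical is characteristic, non--degeneracy rules out $\mathcal{H}_{eL}=\mathbb{R}^2$, the case $\nu(X_1)=X_1\bmod\mathbb{R}^2$ is excluded, and the shift by $v\in\mathbb{R}^2$ is absorbed by an inner conjugation, leaving $\nu|_{\mathbb{R}^2}$ a reflection whose eigenlines are $\ell$ and $\mathcal{H}_{eL}\cap\mathbb{R}^2$. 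A useful observation implicit in your argument, which the paper's computation obscures, is that the symmetry condition constrains only the hyperplane $\mathcal{H}_{eL}$ and not $J$ at all (any $-\id$ on $\mathcal{H}_{eL}$ is complex linear for every $J$).

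There is, however, a concrete gap in the last reduction: the claim that ``a scaling of $\mathbb{R}^2$ reduces the remaining freedom in $J$ to a sign'' fails. Once the reflection is normalized, the complex structures on $\mathcal{H}_{eL}=\langle X_1\rangle\oplus(\mathcal{H}_{eL}\cap\mathbb{R}^2)$ form a two--dimensional family, $J=\left(\begin{smallmatrix}a&b\\c&-a\end{smallmatrix}\right)$ with $a^2+bc=-1$ in the basis $(X_1,e_2)$, whereas the scalings $\mathrm{diag}(1,\mu)$ form a one--parameter group acting by $a\mapsto a$, $b\mapsto b/\mu$; the parameter $a$ is likewise invariant under the rotation by $\pi$ and under the outer automorphism $X_1\mapsto -X_1$, so your listed transformations leave a one--parameter family of apparently inequivalent structures. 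The missing ingredient is the remaining inner automorphisms $\exp(\ad(v))$ for $v\in\ell$, which fix $\mathbb{R}^2$ and $\nu$ pointwise but shear $X_1\mapsto X_1+t\,e_2$ inside $\mathcal{H}_{eL}$; under these, $a\mapsto a-tb$ with $b\neq 0$, so $a$ can be normalized to $0$, and only then do the scalings and the outer automorphism finish the job. With this correction your classification closes and agrees with the paper's transitivity computation. (Two smaller points: in the excluded case $\nu|_{\mathbb{R}^2}=-\id$ with $\nu(X_1)=X_1+v$ the $+1$--eigenspace actually has the correct dimension one, and the obstruction is that the $-1$--eigenspace is the abelian radical, i.e.\ degeneracy; and condition (5) of Lemma \ref{lem4} alone determines $a,A,\Rho_1,\Rho_2$ --- \cite[Lemma 3.5]{HG-dga} enters only in the equivalence step.)
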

\begin{proof}
Consider an invertible linear map $B: \mathbb{R}^2\oplus \frak{so}(2)\to \mathbb{R}\otimes \mathbb{C}^2$. 
The construction of the objects from the algorithm is clear in this case.
We need to find for which maps $B$ the components 
$$\mathbb{R}\otimes \mathbb{C}\to \mathbb{R}\oplus \frak{csu}(1)\oplus \mathbb{R}^*, \ \ \  \mathbb{C}\otimes \mathbb{C}\to \mathbb{C}\oplus \mathbb{C}^*,\ \ \  \ \mathbb{C}\otimes \mathbb{C}\to \mathbb{R}$$
of $\tau$ vanish for all linear maps $a,A,\Rho_1,\Rho_2$. In fact, this provides three equations on the entries of the matrix $B$ that can be solved explicitly. In the standard bases of $\mathbb{R}^2\oplus \frak{so}(2)$ and $\mathbb{R}\otimes \mathbb{C}$, the inverses of matrices $B$ that satisfy these equations define the following subvariety:
\begin{align} \label{var}
 \left( \begin{matrix} 
{p_1p_2 -p_3p_4 \over 2} & p_5 & {p_5p_3 -2p_6 \over 2}\\
p_6&p_4&p_2\\ 
0&p_1&p_3 
\end{matrix} \right).
\end{align}
Thus it remains to check the action of morphisms from \cite[Lemma 3.5]{HG-dga} that determine which extensions define equivalent CR geometries. In particular, there are 
\begin{itemize}
\item four--dimensional Lie group of derivations of $\mathbb{R}^2\oplus \frak{so}(2)$ that in addition contains the homothethies, and
\item two--dimensional Lie subgroup that forms center of $CSU(p,q)$. 
\end{itemize}
We compute that the induced action of these morphisms on the six--dimensional variety $(\ref{var})$ is transitive and the matrix
$$
\left( \begin{matrix}  {1 \over 2}&0&0\\
0&0&1\\ 
0&1&0 
\end{matrix} \right)
$$
corresponds to the extension $(\ref{neE2})$. 
\end{proof}

\section{Metrizability and CR embeddings}

In this section, we always consider the $K$--invariant Weyl connection $D$ corresponding to a normal extension $(\alpha,i)$ describing a homogeneous CR geometry $(M,\mathcal{H},J)$ that satisfies $i(L)\subset U(p,q)$, $i(s)=s_{0,0}$ and
$$\alpha((x,X)+\fl)=\left(
\begin{matrix}
aix&\Rho_1(X)& \Rho_2(x) \\ X&Ax&-I\Rho_1(X)^* \\ x&-IX^*&aix
\end{matrix}
\right)+\alpha(\fl).$$ Moreover, we always assume $\Ad(L^0)|_{\fh/\fl}=\Ad(L)|_{\fh/\fl}$, where $L^0$ is the component of identity of $L$. This gives almost no restriction, because this condition is always satisfied on the symmetric CR geometry on the covering $K^0/L^0\to K/L$. 

\subsection{Distinguished metrics compatible with the CR geometry}

The symmetric bilinear form $h$ generally does not define a pseudo--Riemannian metric on $\mathcal{H}$, because there is no natural way, how to measure the length of elements of $TM/\mathcal{H}$. The situation is different, if there is a Weyl connection preserving not only the decomposition $\mathcal{H} \oplus \ell$, but also a non--zero vector field $r$ in $\ell$. Such  Weyl connection is called \emph{exact} and the vector field $r$ is called the \emph{Reed field}. Equivalently, each exact Weyl connection corresponds to the \emph{contact form} $\theta$ that annihilates $\mathcal{H}$ and satisfies $\theta(r)=1$ for the Reeb field $r$. 
If there is an exact Weyl connection, then $\theta\circ h$ is a pseudo--Riemannian metric on $\mathcal{H}$. This metric is compatible with the CR--structure, because the form $h$ satisfies $h(J\xi,J\nu)=h(\xi,\nu)$  for all sections $\xi,\nu$ of $\mathcal{H}$. The exact Weyl connection preserves this metric and the Reeb field can be used to construct a pseudo--Riemannian metric on $TM$, for which the connection is a metric connection. This metric is usually called a \emph{Webster metric}. However, the Webster metric neither has to exist nor has to be compatible with the symmetries. Therefore, if we want to find a metric compatible with the CR geometry that is preserved by all symmetries, we need to show that the distinguished Weyl connection $D$ is exact.

\begin{thm*} \label{thm3}
Let $K$ be the Lie group generated by all symmetries of a non--flat symmetric CR geometry $(M,\mathcal{H},J)$. Suppose that $\Ad(L^0)|_{\fh/\fl}=\Ad(L)|_{\fh/\fl}$. The distinguished Weyl connection $D$ is exact and furthermore, there exists 
\begin{itemize}
\item a $K$--invariant contact form $\theta$,
\item a $K$--invariant pseudo--Riemannian metric $\bar g:= \theta\circ h$ on $\mathcal{H}$, and 
\item a $K$--invariant Webster metric $g:=\theta\circ h+\theta\otimes \theta$ on $TM$ 
\end{itemize}
such that
\begin{enumerate}
\item $D\bar g=0, Dg=0$,
\item $g|_{\mathcal{H}}=\bar g$ and the Reeb field of $D$ is orthogonal to $\mathcal{H}$ and has length $1$,
\item choosing the Reeb field of $D$ as a trivialization of $TM/\mathcal{H}\otimes \C$, the pseudo--Riemannian metric $\bar g$ on $\mathcal{H}$ coincides with the real part of the Levi form up to a constant multiple,
\item the symmetry at $x$ is linear in geodesic coordinates of $D$ at $x$, reverses the directions of $\mathcal{H}_x$ and preserves the direction of the Reeb field of $D$ at $x$.
\end{enumerate}
\end{thm*}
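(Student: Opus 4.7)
The plan is to exploit the normal extension with $i(L)\subset U(p,q)$ guaranteed by Proposition \ref{prop5} under the hypothesis $\Ad(L^0)|_{\fh/\fl}=\Ad(L)|_{\fh/\fl}$: first produce a $K$--invariant Reeb field and contact form, then assemble $\bar g$ and $g$ algebraically, and finally deduce claim (4) from the fact that $S_x^*D=D$.

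First I would observe that $U(p,q)\subset CSU(p,q)$ is precisely the subgroup on which the character $(a,A)\mapsto -(a+\bar a)$ governing the adjoint action on the $-2$ grading piece $\R\subset \mathfrak{su}(p+1,q+1)$ vanishes. Inspecting the explicit form $(\ref{form-prop5})$ of $\alpha$, its $\mathfrak{csu}(p,q)$--component has purely imaginary entries on the relevant diagonals, so $r_0(\alpha(X))\in \mathfrak{u}(p,q)$ for every $X\in \fk$. Consequently the $K$--invariant Weyl connection $D$, described by $\gamma=\underline{\alpha}^*(\underline{\ad}\circ r_0)$ in Proposition \ref{connexe}, induces the zero connection on the line bundle $\ell=TM/\mathcal{H}$. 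Since $i(L)\subset U(p,q)$ also fixes the $-2$ piece pointwise, the line bundle $\ell$ carries a nowhere vanishing $K$--invariant section $r$, which is then automatically $D$--parallel. This proves exactness of $D$ and supplies the $K$--invariant $D$--parallel contact form $\theta$ defined by $\theta|_{\mathcal{H}}=0$ and $\theta(r)=1$.

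Next I would set $\bar g=\theta\circ h$ on $\mathcal{H}$ and $g=\bar g+\theta\otimes \theta$ on $TM$, with $\mathcal{H}\perp \R r$. Their $K$--invariance is automatic from the $K$--invariance of $\theta$, $r$, and $h$, and claims (2) and (3) are immediate from the definitions. The main obstacle, and the most delicate step, is verifying $D\bar g=0$. My strategy is to observe that any admissible Weyl connection preserves $\mathcal{H}$ and $J$, so the covariant derivative $Dh$ of $h\colon \Lambda^2\mathcal{H}\to TM/\mathcal{H}$ is controlled entirely by the connection $D$ induces on the line bundle $TM/\mathcal{H}$; since we have just shown this induced connection is trivial, $Dh=0$ once $r$ is used as the trivialization, whence $D\bar g=D(\theta\circ h)=0$ and $Dg=0$ follow.

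For claim (4) the argument is short. Every symmetry $S_x$ lies in $K$, so $S_x^*D=D$ and $S_x$ is $D$--affine. A $D$--affine diffeomorphism fixing $x$ is linear in $D$--geodesic coordinates centered at $x$, acting by its differential $T_xS_x$. By the definition of a symmetry, $T_xS_x=-\id$ on $\mathcal{H}_x$, and the dual computation $T_xS_x^*=-\id\oplus \id$ on $\mathcal{H}^*(x)\oplus \ell^*(x)$ carried out in the proof of Lemma \ref{dve-symetrie} gives $T_xS_x=\id$ on $\ell_x$, which is precisely preservation of the direction of the Reeb field at $x$.
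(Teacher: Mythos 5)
Your proof is correct and follows essentially the same route as the paper: both arguments hinge on the fact that the normal extension satisfies $\alpha(\fk)\subset\R\oplus\C^n\oplus\frak{u}(p,q)\oplus\C^{n*}\oplus\R^*$ and $i(L)\subset U(p,q)$, so that $D$ reduces to $U(p,q)$ and is therefore exact, after which the metrics and claim (4) are routine; the paper phrases the reduction via an associated principal $U(p,q)$--connection and its holonomy, while you argue directly with the $K$--invariant parallel section of $\ell$, which is a cosmetic difference. One minor imprecision: $Dh=0$ does not follow merely from the triviality of the induced connection on $TM/\mathcal{H}$ together with $D$ preserving $\mathcal{H}$ and $J$ --- it holds because $h$ is the algebraic Levi bracket on $\gr(TM)$, which every Weyl connection preserves by $G_0$--equivariance --- but this is precisely the ``general theory'' the paper also invokes without proof, so it is not a gap.
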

\begin{proof}
The image of $\alpha$ is contained in $\R \oplus\C^n \oplus \frak{u}(p,q) \oplus\C^{n*} \oplus \R^*$ and thus $\gamma$ describing the corresponding $K$--invariant Weyl connection has values in $\underline{\ad}(\frak{u}(p,q))$. Furthermore, the assumption $\Ad(L^0)|_{\fh/\fl}=\Ad(L)|_{\fh/\fl}$ implies that $i(L)\subset U(p,q)$ and therefore the maps $\underline{\ad}^{-1}\circ \gamma$ and $i$ satisfy all the conditions of \cite[Theorem 1.4.5]{parabook}. This means that the Weyl connection $D$ is an associated connection to a $K$--invariant principal connection on the bundle $K\times_{i(L)}U(p,q)\to K/L$. Therefore it is an exact Weyl connection, because its holonomy is contained in $U(p,q)$. The remaining claims then follow from  general theory.
\end{proof}

In the Riemannian signature,  Theorem \ref{thm3} particularly allows to compare symmetric CR geometries (in our sense) with the symmetric CR geometries in the sense of \cite{KZ}, because we have found a metric compatible with  CR geometry that is preserved by all symmetries.

\begin{thm*} \label{thm4}
Suppose that $p=0$. Then each non--flat symmetric CR geometry is covered by a symmetric CR geometry in the sense of \cite{KZ}, where the covering is a CR map that intertwines the symmetries.
\end{thm*}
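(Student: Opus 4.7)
The plan is to pass to a canonical cover of $M$ on which Theorem \ref{thm3} applies, then verify that its conclusion produces exactly the data required by the Kaup--Zaitsev definition, and finally check that the covering map is CR and intertwines symmetries.

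First, I would use Proposition \ref{thm1} to write the non-flat geometry as $M = K/L$ with $K$ the Lie group generated by symmetries, and consider the covering $\pi: \tilde M := K^0/L^0 \to M$ where $K^0$ is the identity component of $K$ and $L^0 = L \cap K^0$. This is the cover singled out in the preamble to this section: on $\tilde M$ the hypothesis $\Ad(L^0)|_{\fh/\fl} = \Ad(L)|_{\fh/\fl}$ of Theorem \ref{thm3} is trivially satisfied because the new stabilizer is already connected. Since $\pi$ is the quotient by a discrete subgroup, it is a local CR diffeomorphism, and each symmetry on $M$ lifts to a symmetry on $\tilde M$, so $\tilde M$ is itself a non-flat symmetric CR geometry in our sense.

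Next, I would apply Theorem \ref{thm3} on $\tilde M$ to produce a $\tilde K$-invariant contact form $\theta$ and Webster metric $g = \theta \circ h + \theta \otimes \theta$, where $\tilde K$ is the Lie group generated by the symmetries of $\tilde M$. In the Riemannian signature $p=0$ the Levi form $h$ is definite, so after possibly replacing $\theta$ by $-\theta$, both $g|_{\mathcal H}$ and $g$ are positive definite Riemannian. Compatibility with the CR structure in the sense of \cite{KZ} — namely $J$-invariance of $g|_{\mathcal H}$ — is immediate because $g|_{\mathcal H} = \theta \circ h$ and $h$ is $J$-invariant.

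The third step is to verify the symmetry condition of \cite{KZ}. Each symmetry of $\tilde M$ acts as left multiplication by an element $ksk^{-1} \in \tilde K$ from Proposition \ref{existence-2}, hence is an isometry of the $\tilde K$-invariant metric $g$. Combined with involutivity from Lemma \ref{involutivity} and the defining tangent action $-\id$ on $\mathcal H$, this shows $(\tilde M, g)$ is Kaup--Zaitsev symmetric. Finally, to see that $\pi$ intertwines symmetries, I would invoke the uniqueness of symmetries from Proposition \ref{hladky-sys} applied on both sides: the lift of $S_{\pi(\tilde x)}$ is a symmetry of $\tilde M$ at $\tilde x$, and must therefore equal the unique symmetry $S_{\tilde x}$, so $\pi \circ S_{\tilde x} = S_{\pi(\tilde x)} \circ \pi$.

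The main obstacle I expect is bookkeeping around the group-theoretic setup on the cover, specifically confirming that the Lie group generated by symmetries of $\tilde M$ genuinely plays the role of "$K$" in Theorem \ref{thm3} so that the Webster metric produced is invariant under all lifted symmetries rather than only a subgroup. A subsidiary point is the consistent choice of sign for $\theta$ guaranteeing positive definiteness of $g$.
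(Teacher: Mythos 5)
Your argument is correct and follows essentially the same route as the paper, which presents Theorem \ref{thm4} without a separate proof as an immediate consequence of Theorem \ref{thm3} applied on the covering $K^0/L^0\to K/L$ singled out in the preamble of that section, with the sole extra observation that for $p=0$ the invariant Webster metric is (after fixing the sign of $\theta$) Riemannian and the symmetries are isometries of it. The only phrase worth tightening is that the hypothesis $\Ad(L^0)|_{\fh/\fl}=\Ad(L)|_{\fh/\fl}$ holds on the cover ``because the new stabilizer is already connected'': the stabilizer inside the group generated by the lifted symmetries still contains the possibly disconnecting element $s$, and the hypothesis holds rather because $s$ acts trivially on the one--dimensional space $\fh/\fl$.
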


\subsection{CR embeddings}

Consider the fiber bundle $K\times_{i} CSU(p,q)/U(p,q)\to K/L$. If $\Ad(L^0)|_{\fh/\fl}=\Ad(L)|_{\fh/\fl}$ holds, then this bundle is trivial, i.e., 
$$K\times_{i} CSU(p,q)/U(p,q)=K/L\times \mathbb{R}.$$
Let us prove the following statement:

\begin{thm*}\label{embed}
Let $K$ be the Lie group generated by all symmetries of a non--flat symmetric CR geometry $(M,\mathcal{H},J)$. Suppose that $\Ad(L^0)|_{\fh/\fl}=\Ad(L)|_{\fh/\fl}$. Then: 
\begin{enumerate}
\item the manifold $K/L\times \mathbb{R}$ is a complex manifold, and 
\item the inclusion $K/L\to K/L\times \mathbb{R}$ given as a zero section is a CR embedding.
\end{enumerate}
\end{thm*}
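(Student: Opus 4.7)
The plan is to exploit the $K$-invariant Webster geometry from Theorem \ref{thm3} to build an integrable almost complex structure $\mathbb{J}$ on $K/L\times\R$, after which the zero section will automatically be a CR embedding. Writing $r$ for the Reeb field of the distinguished Weyl connection $D$ and $\partial_t$ for the coordinate vector field on the $\R$-factor, I will define $\mathbb{J}$ by $\mathbb{J}|_{\mathcal{H}}=J$, $\mathbb{J}(r)=\partial_t$, $\mathbb{J}(\partial_t)=-r$, extended by $\R$-linearity. Because $\mathcal{H}$, $J$ and $r$ are all $K$-invariant by Theorem \ref{thm3} and $\partial_t$ is preserved by $\R$-translation, $\mathbb{J}$ is invariant under the transitive action of $K\times\R$ on $K/L\times\R$; in particular the Nijenhuis tensor $N_{\mathbb{J}}$ is $K\times\R$-invariant, so verifying integrability reduces to checking $N_{\mathbb{J}}=0$ at the single point $(eL,0)$.

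For this pointwise check I will pass to Lie-algebra data. Via the complexified normal extension $\alpha_\C:\fk_\C\to\mathfrak{sl}(n+2,\C)$ and a lift $R\in\fk$ of the Reeb direction (so that $\alpha(R)$ carries the $\fg_{-2}$-component), the $(0,1)$-distribution at $(eL,0)$ pulls back through $\fk_\C\oplus\C\partial_t\to(\fk/\fl)_\C\oplus\C\partial_t$ to $\fq+\C(R+i\partial_t)$, where $\fq=\alpha_\C^{-1}(\C^{n*}\oplus\fp_\C)$ is the CR-algebra subalgebra discussed in Section 4. Integrability of $\mathbb{J}$ at $(eL,0)$ becomes the condition that this span is a complex Lie subalgebra of $\fk_\C\oplus\C\partial_t$ (with $\partial_t$ central). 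Since $[\fq,\fq]\subset\fq$ is built into $\fq$ and $[R+i\partial_t,R+i\partial_t]=0$, the only non-trivial requirement is $[Y,R]\in\fq+\C R$ for every $Y\in\fq$.

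For this identity I will expand $\alpha([Y,R])=[\alpha(Y),\alpha(R)]-\tau(Y,R)$ with $\tau(Y,R)\in\fp\subset\C^{n*}\oplus\fp_\C$ from Lemma \ref{lem4}(4), so that modulo the complex parabolic $\C^{n*}\oplus\fp_\C$ the task becomes an algebraic bracket computation in the $|2|$-graded Lie algebra $\mathfrak{sl}(n+2,\C)$, performed in the explicit matrix form from formula $(\ref{form-prop5})$. The only terms that threaten to escape $\C^{n*}\oplus\fp_\C+\C R$ land in the complement of $\C^{n*}$ inside $\fg_{-1,\C}$; the assumptions $i(L)\subset U(p,q)$ and $i(s)=s_{0,0}$, together with the fact that $\Rho_1,\Rho_2,(a,A)$ are pinned down by the normality condition (5) of Lemma \ref{lem4}, will force these offending contributions to cancel. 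Establishing this cancellation is the technical heart of the proof and the only place where the specific normal form of $\alpha$ and the particular choice of the Weyl connection $D$ from Theorem \ref{thm3} genuinely enter.

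Once $\mathbb{J}$ is known to be integrable, claim (2) follows directly from the definition: $\mathbb{J}(\mathcal{H})=\mathcal{H}$ while $\mathbb{J}(r)=\partial_t\notin T(K/L)$, so along the zero section $T(K/L)\cap\mathbb{J}\,T(K/L)=\mathcal{H}$ with induced complex structure $\mathbb{J}|_{\mathcal{H}}=J$, recovering the original CR structure $(\mathcal{H},J)$ exactly.
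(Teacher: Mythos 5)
Your setup (the invariant almost complex structure with $\mathbb{J}(r)=\partial_t$, the reduction of integrability to a single point by $K\times\R$--invariance, and the direct verification of claim (2)) matches the paper's proof in substance. But the proposal has a genuine gap at exactly the point you flag as ``the technical heart'': the cancellation that makes the pointwise check succeed is never carried out, and for this theorem that cancellation \emph{is} the content --- everything before it is routine. Asserting that normality ``will force these offending contributions to cancel'' is not a proof; a priori the bracket $[\fg_1\otimes\C,\fg_{-2}]\subset\fg_{-1}\otimes\C$ produces a component in $\mathcal{H}^{1,0}$ (the map $\ad$ of a $\fg_{-2}$--element from $\fg_1$ to $\fg_{-1}$ is conjugate--linear), so closedness of your $(0,1)$--span hinges on a nontrivial property of the $\Rho_1$--part of $\alpha$ that you would have to extract from condition (5) of Lemma \ref{lem4}. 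Moreover, your stated closure condition is not the right one: an element of $\fk_\C$ lies in $\fq+\C(R+i\partial_t)$ only if it lies in $\fq$ (any nonzero coefficient on $R+i\partial_t$ contributes a $\partial_t$--component), so what you must prove is $[Y,R]\in\fq$ for all $Y\in\fq$, not $[Y,R]\in\fq+\C R$. The weaker condition you state would only give involutivity of $\mathcal{H}^{0,1}\oplus\C r$ on $K/L$, not integrability of $\mathbb{J}$ on $K/L\times\R$.

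The paper's proof sidesteps precisely this difficulty, and it is worth seeing how. Instead of using fundamental vector fields (whose brackets are governed by $\fk$ and hence drag in the $\Rho_1$-- and $\Rho_2$--components of $\alpha$), it chooses, for each pair of tangent vectors, extensions adapted to the Weyl structure for which the Lie bracket at the point is identified with $[X,Y]-\tau(\cdot,\cdot)$, where $X,Y$ are the representatives in the model fiber $\R\oplus\C^n\oplus\frak{csu}(p,q)/\frak{u}(p,q)$ and the bracket is taken in the graded algebra $\frak{su}(p+1,q+1)$. Normality (condition (5) of Lemma \ref{lem4}) kills $\tau$ modulo $\frak{u}(p,q)\oplus\C^{n*}\oplus\R^*$, and the Nijenhuis tensor then reduces to the \emph{algebraic} Nijenhuis tensor of the graded algebra on $\fg_{-2}\oplus\fg_{-1}\oplus\langle Z\rangle$ ($Z$ the grading element), which vanishes by a four--line case check using only $J$--invariance of the imaginary part of the Levi form, $[\fg_{-1},\fg_{-2}]=0$, and the eigenvalues of $\ad(Z)$. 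No information about $\Rho_1,\Rho_2,(a,A)$ is needed. If you want to keep your subalgebra formulation, you must either reproduce this tensoriality argument or prove directly (from the normalization equations) that the complex--linear part of $\Rho_1$ contributes nothing to the $\mathcal{H}^{1,0}$--component of $[\alpha_\C(Y),\alpha_\C(R)]$; as written, neither is done.
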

\begin{proof}
We need some more details from the theory of Cartan geometries from \cite[Sections 1.5.13 and 3.1.2]{parabook} to proceed with the proof. First, there is a natural complement of $\frak{u}(p,q)$ in $\frak{csu}(p,q)$ given by  so--called grading element, which is the unique element $Z\in \frak{csu}(p,q)$ acting by $-2$ on $\mathbb{R}$, $-1$ on $\C^n$, $0$ on $\frak{csu}(p,q)$, $1$ on $\C^{n*}$ and $2$ on $\R^*$. Furthermore, there is a Cartan connection on $K/L\times \mathbb{R}$ induced by  CR geometry, where we identify $\mathbb{R}$ (via $\exp$) with the multiples of the grading element $Z$. Then the Weyl connection $D$ provides a reduction of this Cartan connection to $U(p,q)$, which allows us to identify the tangent space of $K/L\times \mathbb{R}$ with the fiber bundle $(K\times \mathbb{R})\times_{i} (\R \oplus\C^n\oplus \frak{csu}(p,q)/\frak{u}(p,q))$. We can extend the complex structure on $\C^n$ to $\R \oplus\C^n\oplus \frak{csu}(p,q)/\frak{u}(p,q)$ by declaring $\R$ to be the imaginary part of $\mathbb{C}$ and the multiples of the grading element in $ \frak{csu}(p,q)/\frak{u}(p,q)$ to form the real part of $\mathbb{C}$. This definition is clearly $U(p,q)$--invariant (and thus $K$--invariant) and defines an almost complex structure $J$ on $K\times_{i} CSU(p,q)/U(p,q)$. 

Let us compute the Nijenhuis tensor $[\xi,\eta]-[J\xi,J\eta]+J([J\xi,\eta]+[\xi,J\eta])$ of $J$ for $\xi,\eta\in T(K/L\times \mathbb{R})$. For each $x\in K/L\times \mathbb{R}$, there are vector fields $\tilde \xi, \tilde \eta$ such that $\tilde \xi(x)=\xi(x), \tilde \eta(x)=\eta(x)$ and that the element $[\tilde \xi,\tilde \eta](x)$ is identified with the element $$[X,Y]-[\alpha(X+\fh),\alpha(Y+\fh)]+\alpha([X+\fh,Y+\fh]) \mod  \frak{u}(p,q)\oplus\C^{n*} \oplus \R^*,$$ 
where $\xi(x), \eta(x)$ are identified with $X,Y\in \R \oplus\C^n\oplus \frak{csu}(p,q)/\frak{u}(p,q)$. This identification can be obtained using the technique analogous to \cite[Proposition 3.1.8]{parabook} for $T(K/L\times \mathbb{R})$ instead of $T(K/L)$. Indeed, the Cartan connection in the background remains the same and we only need to restrict ourselves to normal Weyl connections that coincide with $D$ at $x$ and project the results given by the Cartan connection to $T(K/L\times \mathbb{R})$ instead of $T(K/L)$.
However, $$[X,Y]-[\alpha(X+\fh),\alpha(Y+\fh)]+\alpha([X+\fh,Y+\fh])=[X,Y] \mod \frak{u}(p,q)\oplus\C^{n*} \oplus \R^*$$
 due to the condition (5) from  Lemma \ref{lem4}. Therefore we have $$([\xi,\eta]-[J\xi,J\eta]+J([J\xi,\eta]+[\xi,J\eta]))(x)=[X,Y]-[JX,JY]+J([JX,Y]+[X,JY]).$$
 
Let us now discuss possible values of this expression for all possible incomes:
\begin{itemize}
\item For $X,Y\in \C^n$ we have $[X,Y]-[JX,JY]+J([JX,Y]+[X,JY])=0$.
\item For $X\in \C^n$ and $Y=JZ\in \R$ we have $[X,Y]-[JX,JY]+J([JX,Y]+[X,JY])=[JX,Z]-J([X,Z])=0$.
\item For $X\in \C^n$ and $Y=Z$ we have $[X,Y]-[JX,JY]+J([JX,Y]+[X,JY])=[X,Z]+J([JX,Z])=0$.
\item For $X=JZ\in \R$ and $Y=Z$ we have  $[X,Y]-[JX,JY]+J([JX,Y]+[X,JY])=[JZ,Z]+[Z,JZ]=0$.
\end{itemize}
The remaining possibilities vanish trivially.
 Thus the complex structure is integrable. Then the zero section is a CR embedding, because it is a closed orbit.
\end{proof}

In holomorphic coordinates on $U\subset K/L\times \mathbb{R}$, the hypersurface $K/L\cap U \subset \C^{n+1}$ may be described as a zero set of a function $F: U\to \mathbb{R}$.  Theorem \ref{embed} and  Lemma \ref{dve-symetrie}  provide  distinguished holomorphic coordinates, in which the function $F$ has a specific form.
\begin{cor*}
Let $K$ be the Lie group generated by all symmetries of a non--flat symmetric CR geometry $(M,\mathcal{H},J)$. Suppose that $\Ad(L^0)|_{\fh/\fl}=\Ad(L)|_{\fh/\fl}$. Then for every point $x\in M$, there is a holomorphic coordinate system on $U\subset K/L\times \mathbb{R}$ centred at $x$ such that the function $F(z,w)$ defining $M$ satisfies $F(z,w)=F(-z,w)$.
\end{cor*}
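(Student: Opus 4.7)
The plan is to lift $S_x$ to a biholomorphic involution of the ambient complex manifold $K/L\times \mathbb{R}$, linearize it holomorphically via Bochner's theorem, and then symmetrize the defining function of $M$. On a non--flat symmetric CR geometry, the Chern--Moser tensor $W$ is $K$--invariant and nowhere zero (Proposition \ref{thm1}(2)), so by Lemmas \ref{involutivity} and \ref{hladky-sys} the symmetry $S_x$ is unique and involutive, and by Proposition \ref{existence-2} it is realised as an element of $K$ acting on $K/L$ by left multiplication.

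For the lift, the almost complex structure $J$ on $K/L\times \mathbb{R}$ constructed in the proof of Theorem \ref{embed} is $K$--invariant because it is induced from a $U(p,q)$--invariant structure on the associated bundle. Therefore $S_x$ acts as a biholomorphic involution of a neighbourhood of $x$ in $K/L\times \mathbb{R}$. To compute its differential at $x$, use the identification $T_x(K/L\times \mathbb{R})\cong \mathbb{R}\oplus \mathbb{C}^n\oplus \frak{csu}(p,q)/\frak{u}(p,q)$, where the two $\mathbb{R}$--summands (the Reeb direction from the $-2$--graded piece and the grading--element direction from $\frak{csu}/\frak{u}$) are paired into a single complex coordinate $w$, with the Reeb direction as $\mathrm{Im}(w)$ and the grading direction as $\mathrm{Re}(w)$. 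A direct computation with $s_{0,0}=\mathrm{diag}(-1,E,-1)$ shows that $\Ad(s_{0,0})$ acts by $+\id$ on both $\mathbb{R}$--summands (these are in the $-2$--graded and the $0$--graded pieces, on which the sign pattern $(-,+,-)$ gives trivial conjugation) and by $-\id$ on $\mathbb{C}^n$, so $dS_x|_x$ is $(z,w)\mapsto(-z,w)$.

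Apply Bochner's holomorphic linearization theorem to the cyclic group $\langle S_x\rangle$ of order two: there exist holomorphic coordinates $(z,w)\in \mathbb{C}^n\times \mathbb{C}$ centred at $x$ in which $S_x(z,w)=(-z,w)$ exactly. Finally, if $M=\{F=0\}$ near $x$, set $\tilde F:=\frac{1}{2}(F+F\circ S_x)$; this function still vanishes on $M$ and satisfies $\tilde F(-z,w)=\tilde F(z,w)$ by construction. Since $M\subset K/L\times \mathbb{R}$ is the zero section of the trivial $\mathbb{R}$--bundle, its transverse direction at $x$ is spanned by $\partial/\partial u$ with $u=\mathrm{Re}(w)$; as $dS_x|_x$ acts as the identity in this direction, $d\tilde F(x)$ inherits the nonzero $u$--component of $dF(x)$, and $\tilde F$ remains a defining function.

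The main obstacle is the holomorphic linearization step. The Weyl--normal coordinates of Lemma \ref{dve-symetrie}(3) straighten $S_x$ but are merely real--analytic and are not a priori compatible with the complex structure built in Theorem \ref{embed}. Bochner's classical theorem for a finite group of biholomorphisms resolves this, but only once one has established the biholomorphy of the extended $S_x$; that in turn relies on the $K$--equivariance of the almost complex structure in the proof of Theorem \ref{embed}.
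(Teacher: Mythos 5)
Your argument is correct, and it fills in a proof that the paper itself leaves as a one-line pointer (``Theorem \ref{embed} and Lemma \ref{dve-symetrie} provide distinguished holomorphic coordinates\dots''); the two routes differ at the linearization step. The paper's intended source of the chart is Lemma \ref{dve-symetrie}(3): $S_x$ is linear in the normal coordinates of the normal Weyl connection at $x$, and these are to be combined with the complex structure of Theorem \ref{embed}. You instead note that $S_x\in K$ extends to a biholomorphism of $K/L\times\R$ because the complex structure of Theorem \ref{embed} is $K$--invariant and integrable, compute $dS_x|_x=(-\id)\oplus\id$ on $\C^n\oplus\C_w$ (your sign computation for $\Ad(s_{0,0})$ on the graded pieces is right, and $s_{0,0}\in U(p,q)$ indeed acts trivially on the fibre $CSU(p,q)/U(p,q)$, so the $\mathrm{Re}(w)$--direction is fixed), and then invoke Bochner's linearization for the order--two group $\langle S_x\rangle$ to get genuinely holomorphic coordinates with $S_x(z,w)=(-z,w)$. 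This is a legitimate and arguably more careful route: as you observe, the Weyl normal coordinates are only real and are not a priori compatible with the complex structure built in Theorem \ref{embed}, so the paper's appeal to Lemma \ref{dve-symetrie} leaves a compatibility step implicit that Bochner's theorem dispatches cleanly. Your final symmetrization $\tilde F=\frac12\bigl(F+F\circ S_x\bigr)$, together with the check that $d\tilde F(x)\neq 0$ because $dS_x|_x$ is the identity on the transverse $\mathrm{Re}(w)$--direction while $dF(x)$ already vanishes on $T_xM$, is also correct and is a detail the paper omits entirely.
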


\section{Locally flat CR symmetric spaces} \label{sec6}

Locally flat CR geometries are always locally symmetric (in our sense). Therefore, the following question appear:
Which local symmetries are globally defined?
The answer depends on the topology of the manifold. We show on series of examples that various situations are possible. There are two sources of examples that we study here that are related to flag manifolds. The first series of examples follows the construction from \cite{ja-CEJM,conf} that we apply to CR geometries. The second series of examples involves CR geometries on orbits of real forms in flag manifolds from \cite{AMN}.

\subsection{Non--homogeneous symmetric CR geometries}

Let us apply the construction from \cite{ja-CEJM,conf} to CR geometries. We start with the standard model $PSU(p+1,q+1)/P$. Consider the CR manifold $M:=PSU(p+1,q+1)/P - \{\langle u \rangle,\langle v \rangle\}$, where $u,v \in \C^{n+2}$ are arbitrary non--zero null vectors of $m$. The group $K(u,v)$ of CR transformations of the flat CR geometry on $M$ has two connected components. The identity component  of $K(u,v)$ is the intersection of stabilizers  of $\langle u \rangle$ and  $\langle v \rangle$. The other component contains the elements that swap $\langle u \rangle$ and $\langle v \rangle$. We check whether there is a symmetry at each $K(u,v)$--orbit on $M$. Let us emphasize that if all symmetries at one point of a $K(u,v)$--orbit preserve or swap the points $\langle u\rangle$ and $\langle v\rangle$ then all symmetries at all points of the whole orbit have the same property.
The orbits of the action of $K(u,v)$ on $M$ are characterized by the fact that the action preserves
\begin{itemize}
\item the subspace $\langle u,v\rangle$, and  
\item the (non)--isotropy with respect to the Hermitian form $m$.
\end{itemize}
Moreover, the action of $K(u,v)$ on $\langle u,v\rangle$ depends on whether $\langle u,v\rangle$ is isotropic subspace or not.

\begin{exam} \label{exam1}
Assume that $p,q > 1$, i.e., not the Riemannian signature. Consider the CR manifold $M=PSU(p+1,q+1)/P - \{\langle u \rangle,\langle v \rangle\}$ for arbitrary non--zero null vectors $u,v \in \C^{n+2}$ isotropic with respect to $m$, i.e., $m(u,v)=0$. Then $\langle u,v\rangle -\{\langle u\rangle ,\langle v \rangle \}$ consists of a single orbit of $K(u,v)$. Furthermore, $K(u,v)$--orbits of points $\langle x\rangle $ such that $x \notin \langle u,v\rangle-\{\langle u\rangle ,\langle v \rangle \}$ depend only on the (non)--isotropy of $x$ with respect to $u, v$.

We show that there exist symmetries at all points of each orbit of $K(u,v)$. Instead of fixing $\langle u\rangle,\langle v \rangle$ and discussing symmetries at various points $\langle x \rangle$, we fix the point $\langle x \rangle$ as the point $\langle e_0 \rangle$ given by the first vector of the standard basis $e_0,\dots,e_{n+1}$ of $\C^{n+2}$ and we choose admissible $\langle u \rangle$ and $\langle v \rangle$ such that $\langle e_0 \rangle$ lies in the correct orbit. Then we find all symmetries at $\langle e_0 \rangle$. Let us recall that all symmetries of the standard model at the origin $\langle e_0 \rangle$ are of the form
\begin{align*}
s_{Z,z}=\left(
\begin{matrix}
-1&-Z& iz+\frac12ZIZ^* \\ 0&E&-IZ^* \\ 0&0&-1
\end{matrix}
\right)
,\end{align*}
where $Z=(z_1, \dots, z_n) \in \C^{n*}$ and $z \in \R^*$ are arbitrary, and involutive are those satisfying $z=0$.
\\[1mm]
(1) Let us start with the orbit corresponding to the case $m(e_0,u)\neq 0$ and $m(e_0,v)\neq 0$. We  choose $u=ie_0+\sqrt{2}e_1-ie_{n+1}$ and  $v=ie_0-\sqrt{2}e_{n}+ie_{n+1}$. Direct computation gives that there is exactly one symmetry $s_{Z,z}$, where $Z=(-i\sqrt{2},0, \dots, 0,-i\sqrt{2})$ and $z=0$. This symmetry is involutive and swaps $\langle u \rangle$ and $\langle v \rangle$. There is no symmetry preserving them.
\\[1mm]
(2) Let us now consider the orbit for the case $m(e_0,u)=0$ and $m(e_0,v)\neq 0$ (which is the same orbit as the orbit for the case $m(e_0,u)\neq 0$ and $m(e_0,v)=0$). We  choose $u=e_1+e_{n}$ and $v=ie_{n+1}$. Direct computation gives that there is exactly one symmetry $s_{Z,z}$, where $Z=(0, \dots,0)$ and $z=0$. This symmetry is involutive and preserves $\langle u \rangle$ and $\langle v \rangle$. There is no symmetry swapping them.
\\[1mm]
(3) The next possibility is the orbit for the case $m(e_0,u)=m(e_0,v)=0$ and $e_0 \in \langle u,v \rangle$. We  choose $u=e_{1}+e_{n}$ and $v=e_0+e_{1}+e_{n}$. Computation gives that there are (many) symmetries $s_{Z,z}$, where $Z=(z_1, \dots,z_{n})$ with components $z_k=a_k+ib_k$ for $k=1, \dots, n$ satisfies $a_1+a_{n}+1=0$ and $b_1+b_{n}=0$, and $a_k,b_k$ for $k=2, \dots, n-1$ and $z$ are arbitrary.
All these symmetries swap $\langle u \rangle$ and $\langle v \rangle$, and there are no symmetries preserving them. In particular, there are also non--involutive symmetries for $z\neq 0$.
\\[1mm]
In fact, this covers all possible orbits for the case $p=1$ or $q=1$, i.e., the Lorentzian signature.
In the other cases, there is one more orbit.
\\[1mm]
(4) Consider the orbit for the case $m(u,e_0)=m(v,e_0)=0$ and $e_0 \notin \langle u,v \rangle$. We choose $u=e_1+e_{n}$ and $v=e_2+e_{n-1}$. Computation gives that there are (many) symmetries $s_{Z,z}$, where $Z=(z_1, \dots,z_{n})$ satisfies $a_1+a_n=0$, $b_1+b_n=0$, $a_2+a_{n-1}=0$ and $b_2+b_{n-1}=0$ and  $a_k,b_k$ for $k=3, \dots, n-2$ and $z$ are arbitrary. All these symmetries preserve $\langle u \rangle$ and $\langle v \rangle$ and there are no symmetries swapping them. In particular, there are also non--involutive symmetries for $z\neq 0$.
\\[1mm]
Altogether, symmetries at different orbits behave differently. Therefore, there is no smooth system of symmetries. In particular, there is no pseudo--Riemannian metric compatible with the CR geometry that would be preserved by some symmetry at every point.
\hspace{\fill} $\Diamond$
\end{exam}

Let us show that this principle does not work if we remove two points corresponding to non--isotropic vectors. 

\begin{exam}
Consider the manifold $M=PSU(p+1,q+1)/P - \{\langle u \rangle,\langle v\rangle\}$ for arbitrary non--zero null vectors $u,v \in \C^{n+2}$ that are non--isotropic for $m$, i.e. $m(u,v) \neq 0$.
We choose $u=e_{n+1}$ and $v=e_0+\sqrt{2}e_1+(1+i)e_{n}$.  Computation gives that there is no symmetry at $\langle e_0\rangle$ preserving or swapping $\langle u \rangle$ and $\langle v\rangle$. Let us remark that the component of identity of $K(u,v)$ is isomorphic to the group $CSU(p,q)$ and $K(u,v)$ does not act transitively on $\langle u,v\rangle-\{\langle u\rangle ,\langle v \rangle \}$.
\hspace{\fill} $\Diamond$
\end{exam}

\subsection{Flat homogeneous symmetric CR geometries and orbits of real forms in complex flag manifolds}

It follows from Lemma \ref{lem4} that an extension $(\alpha,i)$ of $(K,L)$ to $(PSU(p+1,q+1),P)$ corresponds to  flat CR geometry if and only if $\alpha$ is a Lie algebra homomorphisms. Therefore, we can present examples of extensions describing flat homogeneous symmetric CR geometries just by specifying the Lie subalgebra of $\frak{su}(p+1,q+1)$ that coincides with the image of $\alpha$. In general, the group $K$ does not have to contain symmetries. Moreover, symmetries do not have to preserve $\alpha(\fk)$. This is satisfied if $K$ is the group generated by symmetries or the full group of CR automorphisms. 

\begin{exam}
Consider the orbits of $PSp(1,1)$ on $\mathbb{C}P^4$ given by inclusion $PSp(1,1)\subset PSp(4,\mathbb{C})\subset PGl(4,\mathbb{C})$. Due to the isomorphisms $PSp(1,1)\cong PO(1,4)$, these orbits can also be interpreted as orbits in the flag manifold of 2--planes in quadric in $\mathbb{C}P^5$. There is a normal extension given by identifying the following Lie subalgebra of $\frak{su}(2,2)$ with the image of $\alpha(\frak{sp}(1,1)):$

$$
\left( \begin{matrix} l_1+il_2&i X_2+i 
l_5- X_1+l_4&iX_4+il_5-X_3+
l_4&i \left(l_3+x \right) \\ iX_2+X_1&-{i \over 2} \left( 2l_2+2x+ l_3 \right) &-{il_3 \over 2
}-l_1&iX_2+il_5+X_1-l_4
\\ iX_4+ X_3&{il_3 \over 2}-l_1
&-{i \over 2} \left( 2l_2-2x-l_3 \right) &-iX_4-i
l_5-X_3+l_4\\ ix&iX_2-X_1&X_3-iX_4&- l_1+il_2\end{matrix}
 \right),
 $$
 where $l_i$--entries generate the Lie algebra of the stabilizer $L=CSO(2)\rtimes S^2\mathbb{R}^2$ of a point in the minimal orbit. Precisely, $\langle l_1,l_2\rangle=\frak{cso}(2)$ and $\langle l_3,l_4,l_5\rangle=S^2\mathbb{R}^2$.
\end{exam}

\begin{exam}
Consider the orbits of $PSp(4,\mathbb{R})$ on $\mathbb{C}P^4$ given by inclusion $PSp(4,\mathbb{R})\subset PSp(4,\mathbb{C})\subset PGl(4,\mathbb{C})$. Due to the isomorphisms $PSp(4,\mathbb{R})\cong PO(2,3)$, these orbits can again be interpreted as orbits in the flag manifold of 2--planes in quadric in $\mathbb{C}P^5$. There is a normal extension given by identifying the following Lie subalgebra of $\frak{su}(2,2)$ with the image of $\alpha(\frak{sp}(n+2,\mathbb{R})):$

$$
 \left( \begin {matrix} l_1+i l_2&X_1-i
X_2+l_4+il_5&X_3-i X_4-l_4-i 
l_5&i \left(l_3+x \right) \\ i X_2+ X_1&-{i \over 2} \left( 2 l_2-2x- l_3 \right) & 
l_1-{il_3 \over 2}&-iX_2+i l_5- X_1- l_4
\\ iX_4+ X_3& l_1+{il_3 \over 2}
&-{i \over 2} \left( 2l_2+2x+l_3 \right) &iX_4+il_5+X_3-l_4\\ ix&iX_2-X_1&X_3-iX_4&- l_1+il_2\end{matrix}
 \right),
 $$
 where $l_i$--entries generate the Lie algebra of the stabilizer $L=CSO(2)\rtimes S^2\mathbb{R}^2$ of a point in $5$--dimensional orbit (which is not minimal). Precisely, $\langle l_1,l_2\rangle=\frak{cso}(2)$ and $\langle l_3,l_4,l_5\rangle=S^2\mathbb{R}^2$.
\end{exam}

In both examples, $\fk$  is simple and $\fq$ is a parabolic subgalgebra of $\fk_\mathbb{C}$. In \cite{AMN}, the authors discuss which CR algebras $(\fk,\fq)$ for simple Lie algebras $\fk$ and  parabolic subalgebras $\fq$ of the complexification of $\fk$ are symmetric. In fact, they correspond to orbits of real forms in complex flag varieties. Therefore, symmetric CR algebras of these types generalize bounded symmetric domains.

We show that if CR algebra $(\fk,\fq)$ for a simple Lie algebra $\fk$ and a parabolic subalgebra $\fq$ of the complexification $\fk_{\mathbb{C}}$ of $\fk$ corresponds to non--degenerate symmetric CR geometry of hypersurface type, then the geometry is necessarily flat.  Therefore, we can use the results of \cite{O} to classify all possible cases.

\begin{prop*}
Let $(\fk,\fq)$ be a CR algebra such that $\fk$ is simple and $\fq$ is a parabolic subalgebra of $\fk_{\mathbb{C}}$ and the corresponding CR geometry is non--degenerate and of hypersurface type. Then the following statements hold:

\begin{enumerate}

\item If the CR geometry is symmetric, then the CR geometry is flat.

\item If the CR geometry is flat, then it corresponds to one of the following possibilities:

\begin{enumerate}

\item $\fk=\frak{su}(p+1,q+1)$ and $\fl=\fp$,

\item $\fk=\frak{sp}({p+1\over 2},{q+1\over 2})$ and $\fl=\frak{co}(2)\oplus \frak{sp}({p-1 \over 2},{q-1\over 2}) \oplus \mathbb{R}^{2}\otimes\mathbb{R}^{n-2*}\oplus S^2\mathbb{R}^2$,

\item $\fk=\frak{sp}(n+2,\mathbb{R})$ and $\fl=\frak{co}(2)\oplus \frak{sp}(n-2,\mathbb{R}) \oplus \mathbb{R}^{2}\otimes\mathbb{R}^{n-2*}\oplus S^2\mathbb{R}^2$, where $\mathbb{R}^{2}\otimes\mathbb{R}^{n-2*}\oplus S^2\mathbb{R}^2$ is the positive part of the parabolic subalgebra corresponding to the stabilizer of a Lagrangian 2--plane in $\mathbb{R}^{n+2}$.

\end{enumerate}

\item If the CR geometry is flat and $\fk_{\mathbb{C}}=\frak{sp}(n+2,\mathbb{C})$ is the full Lie algebra of complete infinitesimal automorphism and $n>2$, then the corresponding CR geometry is not symmetric.

\item If the CR geometry is flat and corresponds to $(2n+1)$--dimensional orbit of the real form of $\frak{sp}(n+2,\mathbb{C})$ in $\mathbb{C}P^{n+2}$, then the corresponding CR geometry is symmetric if and only if $n=2$ or the orbit is minimal, i.e., if $\fk\neq \frak{sp}(n+2,\mathbb{R})$.

\end{enumerate}
\end{prop*}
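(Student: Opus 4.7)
The plan is to combine Onishchik's classification~\cite{O} of orbits of real forms in complex flag varieties with the algorithmic criterion for symmetry developed at the end of Section~\ref{sec4.2}.

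Part~(2) is logically prior: I would invoke~\cite{O} to enumerate all CR algebras $(\fk,\fq)$ with $\fk$ simple and $\fq$ parabolic in $\fk_{\mathbb{C}}$ yielding a non--degenerate CR geometry of hypersurface type. Imposing the codimension--one and non--degeneracy conditions cuts the list to the three families (a), (b), (c). For each of them, condition~(5) of Lemma~\ref{lem4} forces $\alpha$ to be a Lie algebra homomorphism, so the curvature $\kappa$ and hence the Weyl tensor $W$ vanish, giving flatness.

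For part~(1), if the CR geometry is symmetric then by Proposition~\ref{thm1} it is either flat, or homogeneous with $W\neq 0$. In the latter case Theorem~\ref{thm2} provides a normal extension with symmetric decomposition $\fk=\fh\oplus\fm$. But a homogeneous CR geometry with simple $\fk$ and parabolic $\fq$ is still governed by the classification of~\cite{O}, hence belongs to one of the flat families (a)--(c), contradicting $W\neq 0$. Therefore symmetry forces flatness.

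For parts~(3) and~(4), I would apply the step--by--step algorithm of Section~4.3 to the symplectic cases~(b) and~(c). For the minimal orbit $\fk=\frak{sp}(\tfrac{p+1}{2},\tfrac{q+1}{2})$, I would exhibit an explicit complex basis of $\mathcal{H}_{eL}$ and a complementary element for which the induced map $\nu$ of step~(3c) is a Lie algebra involution lifting to a group automorphism of $K$ fixing $L$; thus step~(4b) succeeds and symmetries exist. For the non--minimal orbit $\fk=\frak{sp}(n+2,\R)$ with $n=2$, the low--dimensional isomorphism $\frak{sp}(4,\R)\cong\frak{so}(2,3)$ supplies the needed additional automorphism, so the same construction goes through. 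For $n>2$, a direct calculation of the obstruction components of $\tau$ in step~(3) of the algorithm --- in the spirit of the proof of Proposition~\ref{sedm} --- shows that no admissible choice of representatives turns $\nu$ into a Lie algebra automorphism for any linear maps $a,A,\Rho_1,\Rho_2$. This yields non--symmetry, proving~(3) and completing~(4).

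The hardest step is this final calculation for $\fk=\frak{sp}(n+2,\R)$ with $n>2$: one must show that, ranging over \emph{all} choices of a complex basis of $\mathcal{H}_{eL}$ and of a complementary element in $\fk$, at least one of the obstruction components of $\tau$ from step~(3b) of the algorithm remains nonzero for every admissible $(a,A,\Rho_1,\Rho_2)$. This is a finite but delicate representation--theoretic computation inside $\frak{sp}(n+2,\mathbb{C})$, and its outcome is precisely what distinguishes the minimal orbit (symmetric) from the non--minimal orbit (non--symmetric) in this dimension range.
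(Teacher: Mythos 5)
There is a genuine gap, and it sits at the logical root of your argument: you invoke Onishchik's classification \cite{O} to enumerate \emph{all} CR algebras $(\fk,\fq)$ with $\fk$ simple and $\fq\subset\fk_{\mathbb{C}}$ parabolic of non--degenerate hypersurface type, and then deduce flatness (your version of part (2)) and, from that enumeration, part (1). But \cite{O} classifies Lie algebras acting transitively on certain manifolds; in the present setting it classifies subalgebras $\fk_{\mathbb{C}}\subset\frak{sl}(n+2,\mathbb{C})$ with $\fk_{\mathbb{C}}+(\mathbb{C}^*\oplus\fp_{\mathbb{C}})=\frak{sl}(n+2,\mathbb{C})$, i.e.\ it only becomes applicable \emph{after} flatness is known, because only flatness realizes $\fk_{\mathbb{C}}$ inside $\frak{sl}(n+2,\mathbb{C})$ acting locally transitively on the model. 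Your order is reversed: \cite{O} cannot be used to prove flatness, and a priori there could be non--flat hypersurface--type CR algebras with simple $\fk$ and parabolic $\fq$ that this classification does not see (the proposition claims flatness only for the \emph{symmetric} ones). The paper proves (1) by a different and much shorter argument: for a non--flat symmetric geometry the group generated by symmetries satisfies, by \cite[Theorem 3.1]{HG-dga}, that $\fl_{\mathbb{C}}$ contains no Cartan subalgebra of $\fk_{\mathbb{C}}$, whereas for parabolic $\fq$ the subalgebra $\fq\cap\bar{\fq}$ always contains one --- a contradiction. Only then is flatness fed into \cite{O} to obtain (2), reducing to $\fk_{\mathbb{C}}=\frak{sl}(n+2,\mathbb{C})$ or $\frak{sp}(n+2,\mathbb{C})$ and taking real forms of the symmetric pair.

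For (3) and (4) your proposal is a computation plan rather than a proof: the step you yourself flag as hardest (showing that for $\frak{sp}(n+2,\R)$ with $n>2$ no choice of representatives makes $\nu$ a Lie algebra automorphism) is exactly what is left undone, and it is avoidable. The paper settles (3) structurally: a symmetry would make $\Ad(s_{0,0})$ an involution of $\frak{sp}(n+2,\R)$ whose fixed--point subalgebra has complexification $\frak{gl}(2,\mathbb{C})\oplus\frak{sp}(n-2,\mathbb{C})$, which does not occur in the classification of simple symmetric spaces when $n>2$. For (4) it uses maximality of $\frak{sp}(n+2,\mathbb{C})$ in $\frak{sl}(n+2,\mathbb{C})$: for $n>2$ symmetry forces the orbit to be equivalent to the standard model, hence compact, hence minimal, and minimality is equivalent to $\fk\neq\frak{sp}(n+2,\R)$ by \cite{AMN}. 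To salvage your route you would need to prove (1) independently first (the Cartan--subalgebra argument is the natural tool) and then actually carry out the $\tau$--obstruction computation for $n>2$; as written, neither is done.
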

\begin{proof}
If such symmetric CR geometry is non--flat, then $K$ has to be generated by symmetries and it follows from \cite[Theorem 3.1]{HG-dga} that the complexification of $\fl$ does not contain a Cartan subalgebra of $\fk_{\mathbb{C}}$. 
On the other hand,  if $\fk$ is simple and $\fq$ is a parabolic subalgebra of $\fk_{\mathbb{C}}$, then $\fq\cap \bar{\fq}$ contains a Cartan subalgebra of $\fk_{\mathbb{C}}$. This is a contradiction and therefore, the claim (1) holds.

If such symmetric CR geometry is flat, then $\fk_{\mathbb{C}}$ is isomorphic to a Lie subalgebra of $\frak{sl}(n+2,\mathbb{C})$, $\fq=\fk_\mathbb{C}\cap (\mathbb{C}^*\oplus \fp_{\mathbb{C}})$ is a parabolic subalgebra of $\fk_\mathbb{C}$ and $\fk_{\mathbb{C}}/\fq=\frak{sl}(n+2,\mathbb{C})/ (\mathbb{C}^*\oplus \fp_{\mathbb{C}}).$ All such cases are classified in \cite{O} and it follows that $\fk_{\mathbb{C}}=\frak{sl}(n+2,\mathbb{C})$ or $\fk_{\mathbb{C}}=\frak{sp}(2n+2,\mathbb{C})$. The first case corresponds to the standard model. The remaining cases correspond to the symmetric pair $(\frak{sl}(n+2,\mathbb{C}), \frak{sp}(2n+2,\mathbb{C}))$. Real forms of this symmetric pair are well--known and correspond to suitable inclusions $\frak{sp}({p+1\over 2},{q+1\over 2})\subset \frak{su}(p+1,q+1)$ or $\frak{sp}(n+2,\mathbb{R})\subset \frak{su}(n+1,n+1)$. If such inclusion provides an extension, then it is unique (up to equivalence). Therefore, it suffices to show that the cases in the claim (2) correspond to non--degenerate CR geometries of hypersurface type.  This follows from the fact that $\frak{co}(2)\cong \mathbb{C}$ defines a complex structure on the whole $\fk/\fl$ with the exception of the trace part of $(S^2\mathbb{R}^2)^*$.
 
If
$\fk_{\mathbb{C}}=\frak{sp}(n+2,\mathbb{C})$ is the full Lie algebra of complete infinitesimal automorphism and the corresponding CR geometry is symmetric, then $\Ad(s_{0,0})$ induces an involution of $\frak{sp}(n+2,\mathbb{R})$.  It follows from the description of $\fl$ that the stabilizer has to have the form $\frak{gl}(2,\mathbb{C})\oplus \frak{sp}(n-2,\mathbb{C})$. Therefore the claim (3) follows from the fact that this stabilizer does not appear in the classification of simple symmetric spaces if $n>2$.

Since $\frak{sp}(n+2,\mathbb{C})$ is maximal subalgebra of $\frak{sl}(n+2,\mathbb{C})$, the only possibility for the orbit to be symmetric is to be equivalent to standard model which is compact. Since the orbit is compact if only if the orbit is minimal, the claim (4) follows. It follows from \cite{AMN} that the orbit is minimal if and only if $\fk\neq \frak{sp}(n+2,\mathbb{R})$.
\end{proof}

\end{document}